\chardef\bslash=`\\ % p. 424, TeXbook
\newcommand{\cl}{\operatorname{Cl}}
\newcommand{\Sing}{\operatorname{Sing}}
\newtheorem{theorem}{Theorem}[section] % 1st argument is your name for it
\newtheorem{lemma}[theorem]{Lemma}     % 2nd argument is what is printed
\newtheorem{corollary}[theorem]{Corollary}
\newtheorem{defi}[theorem]{Definition}
\newtheorem{conjecture}[theorem]{Conjecture}
\newtheorem*{hausdorff-limit}{Theorem E}
\newcommand{\eval}[2][\right]{\relax
  \ifx#1\right\relax \left.\fi#2#1\rvert}
\title[On the intersection of sectional-hyperbolic sets]{On the intersection of sectional-hyperbolic sets}
\author{S. Bautista, C.A. Morales}
\address{Instituto de Matem\'atica, Universidade Federal do Rio de Janeiro, P. O. Box 68530, 21945-970 Rio
de Janeiro, Brazil.}
\email{serafin@impa.br, morales@impa.br}
\thanks{Partially supported by CNPq, FAPERJ, CAPES, PNPD and PRONEX/DS from Brazil and UNAL from Colombia.}
\subjclass[2010]{Primary: 37D20; Secondary: 37C70}
\keywords{Sectional-hyperbolic set, 2-inner product, Flow.}
\begin{document}

\begin{abstract}
We analyse the intersection of positively and negatively sectional-hyperbolic sets
for flows on compact manifolds. First
we prove that such an intersection is hyperbolic if the intersecting sets are both transitive
(this is false without such a hypothesis). Next we prove that, in general,
such an intersection consists of a nonsingular hyperbolic set,
finitely many singularities and regular orbits joining them.
Afterward we exhibit a three-dimensional star flow
with two homoclinic classes, one being positively (but not negatively)
sectional-hyperbolic and
the other negatively (but not positively) sectional-hyperbolic,
whose intersection reduces to a single periodic orbit.
This provides a counterexample to a conjecture by Shy, Zhu, Gan and Wen (\cite{sgw}, \cite{zgw}).
\end{abstract}

%\part{Use this type of header for very long papers only}
% use lowercase except for proper names
\maketitle
\section{Introduction} % use lowercase except for proper names
\label{intro}

\noindent
Dynamical systems is concerned with the study of the asymptotic behavior of the orbits of a given system.
Certain hypothesis like Smale's hyperbolicity guarantee the knowledge of this behavior.
Indeed, the celebrated {\em Smale spectral decomposition theorem}
asserts that every hyperbolic system on a compact manifold comes equipped with finite many pairwise disjoint
compact invariant sets (homoclinic classes or singularities) to which every trajectory converge.
Although present in a number of interesting examples,
such a hypothesis is far from being abundant in the dynamical forrest.
This triggered several attempts to extend it
including the {\em sectional-hyperbolicity} \cite{memo}, committed to merge
the hyperbolic theory to the so-called {\em geometric}
and {\em multidimensional Lorenz attractors} \cite{abs}, \cite{bpv}, \cite{gw}.
A number of results from the hyperbolic theory have been carried out to the sectional-hyperbolic context.
This is nowadays matter of study in a number of works, see \cite{ap} and references therein.
One of these results was
motivated by the well-known fact
that two different homoclinic classes contained in a common hyperbolic set
are disjoint.
It was quite natural to ask if this statement is also true in the sectional-hyperbolic context too.
In other words, are two different homoclinic classes contained in a common sectional-hyperbolic set
disjoint?
But recent results dealing with this question say that the answer is negative
\cite{mp2}, \cite{mp3}.
Moreover, \cite{mp3} studied the dynamics
of nontransitive sectional-Anosov flows with dense periodic orbits nowadays called {\em venice masks}.
It was proved that three-dimensional venice masks with a unique singularity exists \cite{bmp}
and that their maximal invariant set consists of two different homoclinic classes with nonempty intersection \cite{mp3}.
Venice mask with $n$ singularities can be constructed for $n\geq3$ whereas ones
with just two singularities have not been constructed yet.

These fruitful results motivate
a related problem which is the analysis of the intersection of a sectional-hyperbolic set for the flow
and a sectional-hyperbolic set for the reversed flow.
For simplicity we keep the terms {\em positively} and {\em negatively sectional-hyperbolic} for these sets
(respectively) which was coined by Shy, Gan and Wen in their recent paper \cite{sgw}.
After observing that every hyperbolic set can be realized as such an intersection, we
show an example where such an intersection is not hyperbolic.
Next we show that such an intersection is hyperbolic if the intersecting sets
are both transitive.
In general the intersection
consists of a nonsingular hyperbolic set (possibly empty),
finitely many singularities and regular orbits joining them.
Finally, we construct a three-dimensional star flow
exhibiting two homoclinic classes, one being positively
(but not negatively) sectional-hyperbolic
and the other being negatively (but not positively) sectional-hyperbolic,
whose intersection reduces to a single periodic orbit.
This will provide a counterexample to a conjecture by Zhu, Gan and Wen \cite{zgw} (as amended by Shy, Gan and Wen \cite{sgw}).

\section{Statement of the results}

\noindent
Let $M$ be a differentiable manifold endowed with a Riemannian metric $\langle\cdot,\cdot\rangle$
an induced norm $\|\cdot\|$.
We call {\em flow} any $C^1$ vector field $X$ with
induced flow $X_t$ of $M$.
If $\dim(M)=3$, then we say that $X$ is a {\em three-dimensional flow}.
We denote by $\Sing(X)$ the set of singularities (i.e. zeroes) of $X$.
By a {\em periodic point} we mean a point $x\in M$ for which there is a minimal $t>0$
satisfying $X_t(x)=x$. By an {\em orbit} we mean $O(x)=\{X_t(x):t\in\mathbb{R}\}$ and by a {\em periodic orbit}
we mean the orbit of a periodic point.
We say that $\Lambda\subset M$ is
{\em invariant} if $X_t(\Lambda)=\Lambda$ for all $t\in\mathbb{R}$.
In such a case we write $\Lambda^*=\Lambda\setminus \Sing(X)$.
We say that $\Lambda\subset M$ is {\em transitive} if there is $x\in \Lambda$ such that
$\omega(x)=\Lambda$, where $\omega(x)$ is the {\em $\omega$-limit set},
$$
\omega(x)=\left\{ y\in M:y=\lim_{n\to\infty} X_{t_n}(x)\mbox{ for some sequence }t_n\to\infty\right\}.
$$
The {\em $\alpha$-limit set} $\alpha(x)$ is the $\omega$-limit set for the reversed flow $-X$.
If the set of periodic points of $X$ in $\Lambda$ is dense in $\Lambda$, we say that $\Lambda$ has {\em dense periodic points}.

A compact invariant set $\Lambda$ is {\em hyperbolic}
if there is a continuous invariant splitting $T_\Lambda M=E^s\oplus E^X\oplus E^u$
and positive numbers $K,\lambda$ such that
\begin{enumerate}
\item $E^s$ is {\em contracting}, i.e.,
$\| DX_t(x)v^s_x\|\leq Ke^{-\lambda t}\|v^s_x\|$ for every $x\in\Lambda$, $v^s_x\in E^s_x$ and $t\geq0$.
\item $E^X_ x$ is the subspace generated by $X(x)$ in $T_x M$, for every $x\in \Lambda$.
\item $E^u$ is {\em expanding}, i.e., $\| DX_t(x)v^u_x\|\geq K^{-1}e^{\lambda t}\|v^u_x\|$ for every $x\in\Lambda$, $v^u_x\in E^u_x$ and $t\geq0$.
\end{enumerate}

A singularity or periodic orbit is hyperbolic if it does as a compact invariant set of $X$.
The elements of a (resp. hyperbolic) periodic orbit
will be called (resp. hyperbolic) periodic points.
A singularity or periodic orbit is a {\em sink} (resp. {\em source})
if its unstable subbundle $E^u$ (resp. stable subbundle $E^s$) vanishes.
Otherwise we call it {\em saddle type}.

The invariant manifold theory \cite{hps} asserts that through any point $x$ of a hyperbolic set it passes a pair of invariant manifolds,
the so-called stable and unstable manifolds
$W^{s}(x)$ and $W^{u}(x)$, tangent at $x$ to the subbundles $E^s_x$ and $E^u_x$ respectively.
Saturating them with the flow we obtain the weak stable and unstable manifolds $W^{ws}(x)$ and $W^{wu}(x)$ respectively.

On the other hand,
a compact invariant set $\Lambda$ {\em has a dominated splitting with respect to the tangent flow}
if there are an invariant splitting $T_\Lambda M=E\oplus F$
and positive numbers $K,\lambda$ such that
$$
\| DX_t(x)e_x\|\cdot \|f_x\| \leq Ke^{-\lambda t} \| DX_t(x)f_x\|\cdot\|e_x\|,
\quad\quad\forall x\in \Lambda, t\geq0, (e_x,f_x)\in E_x\times F_x.
$$

Notice that this definition allows
every compact invariant set $\Lambda$ to have a dominated splitting with respect to the tangent flow:
Just take
$E_x=T_xM$ and $F_x=0$ for every $x\in \Lambda$
(or $E_x=0$ and $F_x=T_xM$ for every $x\in\Lambda$).
However, such splittings
need not to exist under certain constraints.
For instance, not every compact invariant set has a
dominated splitting $T_\Lambda M=E\oplus F$ with respect to the tangent flow
which is {\em nontrivial}, i.e., satisfying
$E_x\neq0\neq F_x$ for every $x\in \Lambda$.

A compact invariant set $\Lambda$ is {\em partially hyperbolic}
if it has a {\em partially hyperbolic splitting},
i.e., a dominated splitting $T_\Lambda M=E\oplus F$ with respect to the tangent
flow whose dominated subbundle $E$ is contracting in the sense of (1) above.

The Riemannian metric $\langle\cdot,\cdot\rangle$ of $M$ induces a {\em $2$-Riemannian metric} \cite{mv},
$$
\langle u,v/w\rangle_p=\langle u,v\rangle_p\cdot \langle w,w\rangle_p-\langle u,w\rangle_p\cdot \langle v,w\rangle_p,
\quad\forall p\in M,\forall u,v,w\in T_pM.
$$
This in turns induces a {\em $2$-norm} \cite{g} (or {\em areal metric} \cite{ka}) defined by
$$
\|u,v\|=\sqrt{\langle u,u/v\rangle_p},
\,\,\,\,\,\,\forall p\in M,\forall u,v\in T_pM.
$$
Geometrically, $\|u,v\|$ represents the area of the paralellogram generated by $u$ and $v$ in $T_pM$.

If a compact invariant set $\Lambda$ has a dominated splitting $T_\Lambda M=E\oplus F$ with respect to the tangent flow,
then we say that its central subbundle $F$ is {\em sectionally expanding} (resp. {\em sectionally contracting}) if
$$
\| DX_t(x) u, DX_t(x) v\|\geq K^{-1}e^{\lambda t}\| u,v\|,
\quad\quad\forall x\in \Lambda, u, v\in F_x, t\geq0.
$$
(resp.
$$
\| DX_t(x) u, DX_t(x) v\|\leq Ke^{-\lambda t}\| u,v\|,
\quad\quad\forall x\in \Lambda, u, v\in F_x, t\geq0.)
$$
By a {\em sectional-hyperbolic splitting} for $X$ over $\Lambda$
we mean a partially hyperbolic splitting $T_\Lambda M=E\oplus F$
whose central subbundle $F$ is sectionally expanding.

Now we define sectional-hyperbolic set.

\begin{defi}
A compact invariant set $\Lambda$ is {\em sectional-hyperbolic} for $X$
if its singularities are hyperbolic and if there is a sectional-hyperbolic splitting for $X$ over $\Lambda$.
Following \cite{sgw}
we use the term
{\em positively} (resp. {\em negatively) sectional-hyperbolic}
to indicate a sectional-hyperbolic set for $X$ (resp. $-X$).
The corresponding sectional-hyperbolic splitting will be termed
{\em positively} (resp. {\em negatively}) {\em sectional-hyperbolic splitting}.
\end{defi}

This definition is slightly different from the original one given in Definition 2.3 of \cite{memo}
(which requires, for instance, that the central subnbundle be two-dimensional at least).
Such a difference permits {\em every} hyperbolic set $\Lambda$ to be
both positively and negatively sectional-hyperbolic.
Indeed, if $T_\Lambda M=E^s\oplus E^X\oplus E^u$ is the respective hyperbolic splitting, then
$T_\Lambda M=E^s\oplus E^{se}$ with $E^{se}=E^X\oplus E^u$
and $T_\Lambda M=\hat{E}^s\oplus \hat{E}^{se}$ with
$\hat{E}^s=E^u$
and $\hat{E}^{se}=E^s\oplus E^X$
define positively and negatively sectional-hyperbolic splittings
respectively over $\Lambda$.
In particular, {\em every hyperbolic set is the intersection of a positively and a negatively
sectional-hyperbolic set}.

\medskip

One can ask if the hyperbolic sets are the sole possible intersection between
a positively and a negatively sectional-hyperbolic set,
but they aren't.
In fact, there are nonhyperbolic compact invariant sets which, nevertheless, are
both positively and negatively sectional-hyperbolic.
This is the case of the example described in Figure \ref{fig1}. In such a figure
$O(x)$ represents the orbit of $x\in W^s(\sigma_1)\cap W^u(\sigma_2)$
whereas a singularity of a three-dimensional flow is {\em Lorenz-like} for $X$
if it has three real eigenvalues $\lambda_1,\lambda_2,\lambda_3$ satisfying
$\lambda_2<\lambda_3<0<-\lambda_3<\lambda_1$.

\begin{figure}[htv] 
\begin{center}
\input{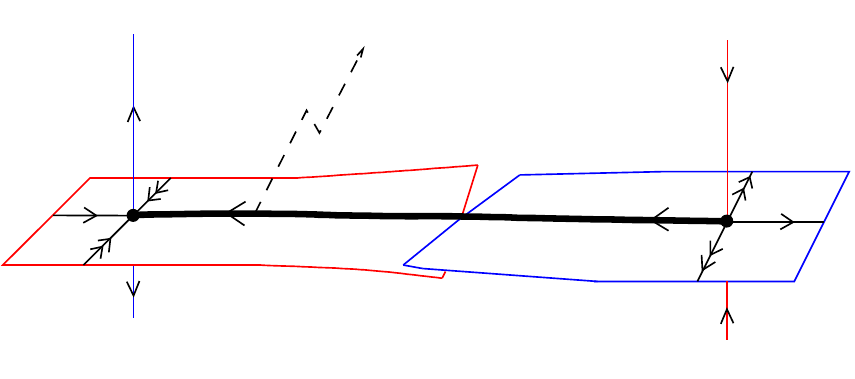_t} 
\caption{Nonhyperbolic but positively and negatively sectional-hyperbolic.}\label{fig1}
\end{center}
\end{figure}

This counterexample motivates the search of sufficient conditions under which
the intersection of a positively and a negatively sectional-hyperbolic set be hyperbolic.
Our first result is about this problem.

\begin{theorem}
\label{th2}
The intersection of a transitive positively sectional-hyperbolic set and a transitive
negatively sectional-hyperbolic set is hyperbolic.
\end{theorem}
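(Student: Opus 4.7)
The plan is to produce a hyperbolic splitting on $\Lambda := \Lambda^+ \cap \Lambda^-$ by directly combining the two sectional-hyperbolic splittings. Denote the positively sectional-hyperbolic splitting by $T_{\Lambda^+}M = E^s \oplus F^{cu}$ and the negatively one by $T_{\Lambda^-}M = F^{cs} \oplus E^u$, so that (from the point of view of $X$) $E^s$ is uniformly contracting, $F^{cu}$ sectionally expanding, $F^{cs}$ sectionally contracting, and $E^u$ uniformly expanding. First I would observe that a uniformly expanding invariant subbundle cannot have a nontrivial component in the dominated part of any containing dominated splitting, which forces $E^u \subset F^{cu}$ over $\Lambda$; symmetrically, $E^s \subset F^{cs}$. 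At regular points $x \in \Lambda$, boundedness of $\|X\|$ on the compact set $\Lambda$ combined with the backward-time growth estimates for $E^s$ and $E^u$ forces $X(x) \in F^{cu}_x \cap F^{cs}_x$.

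The decisive step, and the main obstacle, is to show that $\Sing(X) \cap \Lambda = \emptyset$. Assume for contradiction that $\sigma \in \Lambda$ is singular. Sectional expansion on $F^{cu}_\sigma$ translates spectrally into the statement that every pair of eigenvalues of $DX(\sigma)|_{F^{cu}_\sigma}$ has positive real-part sum, while sectional contraction on $F^{cs}_\sigma$ gives the opposite inequality. Hence $F^{cu}_\sigma \cap F^{cs}_\sigma$ can contain at most one eigendirection, i.e.\ $\dim(F^{cu}_\sigma \cap F^{cs}_\sigma)\leq 1$. I would then bring in the transitivity assumptions: the analysis of orbits of a transitive sectional-hyperbolic set accumulating at $\sigma$ (via its invariant manifolds together with the $\lambda$-lemma) forces $\sigma$ to be Lorenz-like for $X$, and the symmetric argument applied to $-X$ forces $\sigma$ to be Lorenz-like for $-X$ as well. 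These two configurations on the same operator $DX(\sigma)$ are arithmetically incompatible---most transparently in dimension three, where no real $3\times 3$ matrix can be simultaneously Lorenz-like for $X$ and $-X$. This is precisely where both transitivity hypotheses are indispensable; Figure \ref{fig1} shows that the conclusion fails without them.

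Once $\Lambda$ has been shown to be nonsingular, I would close the proof by invoking the standard fact that a compact invariant nonsingular sectional-hyperbolic set is hyperbolic. Concretely, the absence of singularities makes $\|X\|$ bounded below on the compact $\Lambda$, so the sectional expansion of $F^{cu}$ applied to the $2$-planes $\langle X(x), v\rangle$ with $v$ normal to the flow inside $F^{cu}_x$ promotes to uniform expansion of the quotient $F^{cu}/\langle X\rangle$ under the linear Poincar\'e flow. Combined with uniform contraction of $E^s$ and the neutrality of the flow direction, this produces the desired hyperbolic splitting $T_\Lambda M = E^s \oplus \langle X\rangle \oplus E^u$, finishing the proof.
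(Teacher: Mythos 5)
Your proposal follows essentially the same route as the paper's proof: both reduce the theorem to showing that the intersection contains no singularity, using transitivity to force a putative common singularity $\sigma$ to carry a real contracting eigendirection inside the sectionally expanding bundle and a real expanding eigendirection inside the sectionally contracting bundle (the paper's Lemma \ref{palilla}; your ``Lorenz-like for $X$ and for $-X$''), whose spanned $2$-plane then violates the eigenvalue-sum inequalities coming from sectional expansion and contraction, after which the nonsingular intersection is hyperbolic by the Hyperbolic Lemma. The only detail the paper treats that you elide is the degenerate case in which one of the two transitive sets reduces to a single orbit (where the eigendirection lemma does not apply and the intersection is trivially that orbit), together with the fact that the Lorenz-like dichotomy must be replaced in dimension above three by the dimension-free statement $\dim(E^{se}_\sigma\cap F^s_\sigma)=1$ that the paper proves.
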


Consequently,

\begin{corollary}
\label{th1}
Every transitive set which is both positively and negatively sectional-hyperbolic is hyperbolic.
\end{corollary}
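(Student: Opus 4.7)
The plan is to show that $\Lambda := \Lambda^+ \cap \Lambda^-$ admits a hyperbolic splitting once we rule out singularities in $\Lambda$. First I would restrict both sectional-hyperbolic structures to $\Lambda$: the positive splitting $T_\Lambda M = E^s \oplus F^{cu}$ from $\Lambda^+$ and, rewriting the negative one in $X$-notation, $T_\Lambda M = \hat F^{cs} \oplus \hat E^u$ from $\Lambda^-$, where $\hat E^u$ is uniformly expanding and $\hat F^{cs}$ is sectionally contracting. The standard uniqueness principle for dominated splittings shows $E^s \subseteq \hat F^{cs}$ and $\hat E^u \subseteq F^{cu}$: decomposing $v \in E^s$ in the negative splitting and comparing the exponential rates of contraction, expansion and domination forces the $\hat E^u$-component of $v$ to vanish. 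Setting $G := F^{cu} \cap \hat F^{cs}$, this yields a continuous invariant decomposition $T_\Lambda M = E^s \oplus G \oplus \hat E^u$.

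Next I would pin down the dimension of $G$. A $2$-plane contained in $G$ would simultaneously have expanding area (since $G \subseteq F^{cu}$) and contracting area (since $G \subseteq \hat F^{cs}$) under $DX_t$, an impossibility. Hence $\dim G \leq 1$. At any regular point $x \in \Lambda$, the classical compactness argument using the boundedness of $\|X\|$ places $X(x)$ outside the uniformly contracting $E^s$ and the uniformly expanding $\hat E^u$, hence $X(x) \in F^{cu} \cap \hat F^{cs} = G_x$, forcing $\dim G = 1$ on $\Lambda$. (If $\Lambda$ has no regular points it is a finite union of hyperbolic singularities and the theorem is trivial.)

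The crux of the proof is to exclude singularities from $\Lambda$. Suppose, for contradiction, $\sigma \in \Lambda$. Then $G_\sigma$ is a $1$-dimensional invariant subspace with nonzero real eigenvalue $\mu$, by hyperbolicity of $\sigma$. Because $\Lambda^+$ is transitive and $\sigma \in \Lambda^+$, the characterization of singularities in transitive sectional-hyperbolic sets---via the Hyperbolic Lemma of Morales--Pacifico--Pujals and its higher-dimensional extensions, which asserts $W^{ss}(\sigma) \cap \Lambda^+ = \{\sigma\}$ combined with the existence of orbits in $\Lambda^+$ accumulating $\sigma$---forces $F^{cu}_\sigma$ to contain strictly more stable directions than $E^s_\sigma$, i.e.\ at least one negative eigenvalue; sectional expansion on $F^{cu}_\sigma$ allows at most one, and since $\hat E^u_\sigma \subseteq F^{cu}_\sigma$ consists of strictly positive eigenvalues, this negative eigenvalue must belong to $G_\sigma$, giving $\mu < 0$. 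The symmetric argument applied to $\Lambda^-$ and $-X$, using $W^{uu}(\sigma) \cap \Lambda^- = \{\sigma\}$, shows that $\hat F^{cs}_\sigma$ contains exactly one positive eigenvalue, which must also be the eigenvalue of $G_\sigma$, giving $\mu > 0$. This contradicts $\mu < 0$.

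Therefore $\Lambda$ is nonsingular, and on $\Lambda$ one extracts $F^{cu} = \langle X \rangle \oplus E^u$ from sectional expansion and the positive lower bound on $\|X\|$, producing the hyperbolic decomposition $T_\Lambda M = E^s \oplus \langle X \rangle \oplus E^u$. The chief obstacle in this plan is the invocation of the Hyperbolic-Lemma-type characterization of singularities in transitive sectional-hyperbolic sets: it is this step, and only this step, that uses the transitivity hypothesis. Without both transitivity assumptions the signs of $\mu$ at singularities are unconstrained, which is exactly what allows the nonhyperbolic intersection of Figure~\ref{fig1}, where $\sigma_1$ and $\sigma_2$ produce eigenvalues of opposite signs precisely because the underlying sets fail to be transitive.
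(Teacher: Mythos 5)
Your proposal is correct and follows essentially the same route as the paper: it reconstructs the almost-hyperbolic splitting $T_\Lambda M=E^s\oplus G\oplus \hat E^u$ of Lemma \ref{thA}, uses transitivity exactly as in Lemma \ref{palilla} to force a stable (resp.\ unstable) eigendirection into the sectionally expanding (resp.\ sectionally contracting) central subbundle at any singularity, and then concludes via the Hyperbolic Lemma. The only cosmetic difference is the final contradiction: the paper shows $\lambda^s+\lambda^u$ is both positive and negative by applying sectional expansion and contraction to the $2$-plane spanned by the two eigendirections, whereas you observe that both eigendirections would have to coincide with the one-dimensional $G_\sigma$; both arguments rest on the same use of transitivity, which you correctly identify as the essential hypothesis.
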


The similar results replacing transitivity by denseness of periodic orbits hold.

By looking at Figure \ref{fig1} we observe that this example consists of two singularities and a regular point $x$
whose $\omega$-limit and $\alpha$-limit set is a singularity.
This observation is the motivation for the result below.

\begin{theorem}
\label{th3}
Every compact invariant set which is both positively and negatively sectional-hyperbolic
is the disjoint union of
a (possibly empty) nonsingular hyperbolic set $H$,
a (possibly empty) finite set of singularities $S$
and a (possibly empty) set of regular points $R$
such that $\alpha(x)\subset H\cup S$ and $\omega(x)\subset H\cup S$ for every $x\in R$.
\end{theorem}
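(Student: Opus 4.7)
The plan is to exhibit the decomposition explicitly. Set $S=\Sing(X)\cap\Lambda$; since a singularity of a sectional-hyperbolic set is hyperbolic (hence isolated in $M$), compactness of $\Lambda$ forces $S$ to be finite. Let $\Omega\subset\Lambda$ be the non-wandering set of $X|_\Lambda$ and put
\[
H=\Omega\cap(\Lambda\setminus S),\qquad R=(\Lambda\setminus S)\setminus H.
\]
Then $\Lambda=H\sqcup S\sqcup R$ disjointly, $H$ is the regular non-wandering part of $\Lambda$, and $R$ is the regular wandering part; both are $X$-invariant by construction.

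Write the positively and negatively sectional-hyperbolic splittings of $\Lambda$ as $T_\Lambda M=E^s\oplus E^{c+}$ and $T_\Lambda M=\hat E^{c-}\oplus\hat E^u$, with $E^s$ contracting and $\hat E^u$ expanding for $X$. Any nonzero vector in $E^s_x\cap\hat E^u_x$ would contract and expand simultaneously, so $E^s_x\cap\hat E^u_x=\{0\}$ on all of $\Lambda$. On a compact invariant subset $N\subset\Lambda\setminus S$, $\|X\|$ is bounded above and below; decomposing $X(x)=X^s+X^{c+}$ along $E^s\oplus E^{c+}$, the backward-time expansion of $E^s$ yields $\|DX_{-t}X^s\|\geq K^{-1}e^{\lambda t}\|X^s\|$ as $t\to\infty$, which against boundedness of $\|X\|$ along the orbit forces $X^s=0$; hence $E^X\subset E^{c+}$ on $N$, and symmetrically $E^X\subset\hat E^{c-}$. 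The sectional-expansion and sectional-contraction of $E^{c+}$ and $\hat E^{c-}$ respectively force $\dim(E^{c+}_x\cap\hat E^{c-}_x)\leq 1$; at a regular point this intersection equals $E^X_x$, and a short count then gives $\dim E^s+1+\dim\hat E^u=\dim M$. Pairwise transversality (proved by the same kind of norm-and-rate estimates) upgrades this to a continuous invariant direct-sum splitting $T_NM=E^s\oplus E^X\oplus\hat E^u$ whose hyperbolic rates are inherited from the two sectional-hyperbolic splittings. Consequently $H$ is hyperbolic as soon as it is compact.

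The main obstacle is therefore to show that $H$ is closed in $\Lambda$. This is genuinely delicate because the classical Lorenz attractor is a positively (only) sectional-hyperbolic set whose regular non-wandering points accumulate on a singularity, so the two-sided hypothesis must enter essentially. The plan is a proof by contradiction: assume $x_n\in H$ with $x_n\to\sigma\in S$. Then $\|X(x_n)\|\to 0$, and up to a subsequence the unit vectors $X(x_n)/\|X(x_n)\|$ converge (by continuity of the splittings) to some $v\in E^{c+}_\sigma\cap\hat E^{c-}_\sigma$. Non-wandering of each $x_n$ supplies returns of nearby orbits to $x_n$; combining the forward expansion on $\hat E^u$ with the backward expansion on $E^s$ inside a small neighbourhood of $\sigma$ — the latter being precisely the ingredient added by the negative sectional-hyperbolicity to what the positive one alone provides — forces every such returning orbit segment to leave a fixed neighbourhood of $\sigma$, contradicting $x_n\to\sigma$.

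Finally, for $x\in R$ we verify $\omega(x),\alpha(x)\subset H\cup S$. Every point of an $\omega$-limit set is non-wandering for $X|_\Lambda$ and lies in $\Lambda$, so $\omega(x)\subset\Omega\cap\Lambda=H\cup S$, using $S\subset\Omega$; the case of $\alpha(x)$ is identical applied to $-X$.
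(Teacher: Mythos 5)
Your decomposition is genuinely different from the paper's: you take $H$ to be the set of regular non-wandering points of $X|_\Lambda$, whereas the paper takes $H=\bigcap_{(t,\sigma)}X_t(\Lambda\setminus B(\sigma,\delta))$, the set of points whose full orbit avoids fixed small balls around the singularities. The paper's choice makes compactness of $H$ automatic (it is an intersection of compact sets), and the whole burden is then shifted to a local lemma near each singularity; your choice makes the verification $\omega(x),\alpha(x)\subset H\cup S$ for $x\in R$ essentially free, but puts the entire weight on proving that $H$ is closed. You correctly identify this as the crux, but the argument you offer for it does not close. The statement ``the forward expansion on $\hat E^u$ and backward expansion on $E^s$ force every returning orbit segment to leave a fixed neighbourhood of $\sigma$'' is not a contradiction with $x_n\to\sigma$: orbit segments that leave a neighbourhood of a singularity and later return are exactly what happens in the geometric Lorenz attractor, where regular non-wandering (indeed periodic) points do accumulate on the singularity. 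Nothing in your sketch isolates what the second (negative) sectional-hyperbolicity adds beyond the vague remark that it supplies ``backward expansion on $E^s$''; the limit direction $v\in E^{c+}_\sigma\cap\hat E^{c-}_\sigma$ of $X(x_n)/\|X(x_n)\|$ is introduced and never used.

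The missing ingredient is a structural statement about $\Lambda$ near each $\sigma\in S$, which is where the paper spends most of its effort. From the two splittings one first builds continuous invariant bundles $E^s,E^u$ over all of $\Lambda$ with $T_{\Lambda^*}M=E^s\oplus E^X\oplus E^u$ (this part you do sketch). Passing to the limit at $\sigma$ gives $\dim(E^s_\sigma)+\dim(E^u_\sigma)=\dim(M)-1$, and comparing with the hyperbolic splitting $T_\sigma M=F^s_\sigma\oplus F^u_\sigma$ forces $\dim(E^s_\sigma)=\dim(F^s_\sigma)$ or $\dim(E^u_\sigma)=\dim(F^u_\sigma)$. If a point of $\Lambda$ near $\sigma$ lay outside $W^s(\sigma)\cup W^u(\sigma)$, flowing it past $\sigma$ would produce points $y^s\in\Lambda\cap W^s(\sigma)\setminus\{\sigma\}$ and $y^u\in\Lambda\cap W^u(\sigma)\setminus\{\sigma\}$ at which $E^X_y$ would be absorbed into $E^s_y$ or $E^u_y$, contradicting directness of the sum. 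The conclusion is that $\Lambda\cap B(\sigma,\delta)\subset W^s_\delta(\sigma)\cup W^u_\delta(\sigma)$; only from this does one deduce that regular points near $\sigma$ are wandering (so your $H$ is closed), or, in the paper's route, that points of $R$ have $\alpha$- and $\omega$-limit sets in $H\cup S$. Without an argument of this strength your proof of compactness of $H$, and hence of its hyperbolicity, is incomplete.
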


Since the intersection of a positively and a negatively sectional-hyperbolic set is both positively and negatively sectional-hyperbolic,
we obtain the following corollary.

\begin{corollary}
\label{c3}
The intersection of
a positively and a negatively sectional-hyperbolic set
is a disjoint union of
a (possibly empty) nonsingular hyperbolic set $H$,
a (possibly empty) finite set of singularities $S$
and a (possibly empty) set of regular points $R$
such that $\alpha(x)\subset H\cup S$ and $\omega(x)\subset H\cup S$ for every $x\in R$.
\end{corollary}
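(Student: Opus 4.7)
The plan is to reduce the statement directly to Theorem \ref{th3}, by showing that if $\Lambda_1$ is positively sectional-hyperbolic and $\Lambda_2$ is negatively sectional-hyperbolic, then $\Lambda := \Lambda_1 \cap \Lambda_2$ is itself simultaneously positively and negatively sectional-hyperbolic. Once this is established, Theorem \ref{th3} applied to $\Lambda$ immediately yields the decomposition $\Lambda = H \sqcup S \sqcup R$ with the asserted property on $\alpha$- and $\omega$-limit sets.

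First I would note that $\Lambda$ is compact and $X_t$-invariant, being the intersection of two compact $X_t$-invariant sets. Next, let $T_{\Lambda_1}M = E_1 \oplus F_1$ be the positively sectional-hyperbolic splitting for $X$ over $\Lambda_1$, with constants $K,\lambda>0$. Restricting the bundles fiberwise to $\Lambda \subset \Lambda_1$ gives a continuous $DX_t$-invariant splitting $T_\Lambda M = E_1|_\Lambda \oplus F_1|_\Lambda$. The defining inequalities — domination of $E_1$ over $F_1$, uniform contraction along $E_1$, and sectional expansion along $F_1$ in the sense of the $2$-norm $\|\cdot,\cdot\|$ — are pointwise conditions that hold at every $x \in \Lambda_1$ with the same constants $K,\lambda$, hence they hold a fortiori at every $x \in \Lambda$. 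Moreover any $\sigma \in \Sing(X) \cap \Lambda$ is a singularity in $\Lambda_1$, hence hyperbolic. Therefore $\Lambda$ is positively sectional-hyperbolic. Replacing $(X,\Lambda_1)$ by $(-X,\Lambda_2)$ in the same argument shows $\Lambda$ is negatively sectional-hyperbolic as well.

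Having verified the hypothesis of Theorem \ref{th3} for $\Lambda$, I would simply invoke that theorem to obtain a decomposition into a (possibly empty) nonsingular hyperbolic set $H$, a (possibly empty) finite set of singularities $S$, and a (possibly empty) set of regular points $R$ with $\alpha(x)\cup\omega(x) \subset H\cup S$ for every $x \in R$; this is exactly the conclusion of Corollary \ref{c3}.

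The only potential obstacle here is purely bookkeeping, namely the fact that \emph{the notion of sectional-hyperbolicity restricts to compact invariant subsets}. This is nontrivial to state carefully because it involves three separate estimates (domination, stable contraction, central sectional expansion) and a condition on singularities, but all four properties are formulated pointwise with uniform constants, so the restriction is automatic. No compatibility between the positive and the negative splittings on $\Lambda_1$ and $\Lambda_2$ is required, since each structure descends to $\Lambda$ independently of the other — the finer information about how the two splittings interact is precisely what Theorem \ref{th3} supplies.
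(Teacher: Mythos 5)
Your proposal is correct and is exactly the paper's argument: the authors deduce Corollary \ref{c3} from Theorem \ref{th3} via the one-line observation that the intersection of a positively and a negatively sectional-hyperbolic set is itself both positively and negatively sectional-hyperbolic, since all the defining conditions (hyperbolicity of singularities, domination, contraction, sectional expansion) are pointwise with uniform constants and therefore restrict to compact invariant subsets. You have merely written out the restriction step in more detail than the paper does.
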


Our next result is an example of nontrivial transitive sets
which are positively and negatively sectional-hyperbolic (resp.)
whose intersection is the simplest possible, i.e., a single periodic orbit.

Denote by $\cl(\cdot)$ the closure operation.
We say that $H\subset M$ is a {\em homoclinic class} if there is a hyperbolic periodic point $x$ of saddle type
such that
$$
H=\cl(\{q\in W^{ws}(x)\cap W^{wu}(x): \dim(T_qW^{ws}(x)\cap T_qW^{wu}(x))=1\}).
$$
It follows from the {\em Birkhoff-Smale Theorem} that every homoclinic class is a transitive set with dense periodic orbits.

Given points $x,y\in M$, if for every $\epsilon>0$ there are sequences of points $\{x_i\}_{i=0}^n$
and times $\{t_i\}_{i=0}^{n-1}$ such that
$x_0=x$, $x_n=y$, $t_i\geq1$ and $d(X_{t_i}(x_i),x_{i+1})<\epsilon$ for every $0\leq i\leq n-1$,
then we say that $x$ {\em is in the chain stable set of $y$}.
If $x$ is in the chain stable set of $y$ and viceversa, then one says that
$x$ and $y$ are {\em chain related}.
If $x$ is chain related to itself, one says that $x$ is a {\em chain recurrent point}.
The set of chain recurrent points is the {\em chain recurrent set} denoted by $CR(X)$.
It is clear that the chain related relation is in equivalence on $CR(X)$.
By using this equivalence, one splits $CR(X)$ into equivalence classes
denominated {\em chain recurrent classes}.

A flow is {\em star} if it exhibits a neighborhood $\mathcal{U}$ (in the space of $C^1$ flows)
such that every periodic orbit or singularity of every flow in $\mathcal{U}$ is hyperbolic.

With these definitions we obtain the following result.

\begin{theorem}
\label{thB}
There is a star flow $X$ in the sphere $S^3$ whose chain recurrent set is the disjoint union
of two periodic orbits
$O_1$ (a sink), $O_2$ (a source);
two singularities $s_-$ (a source), $s_+$ (a saddle); and two homoclinic classes $H_-$, $H_+$ with the following properties:
\begin{itemize}
 \item $H_-$ is negatively (but not positively) sectional-hyperbolic;
 \item $H_+$ is positively (but not negatively) sectional-hyperbolic;
 \item $H_-\cap H_+$ is a periodic orbit.
\end{itemize}
\end{theorem}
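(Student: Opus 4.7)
The proof is by explicit construction. My plan is to build $X$ on $S^3$ by assembling two geometric-Lorenz-type blocks---one for $X$ giving $H_+$, one for $-X$ giving $H_-$---glued along a single hyperbolic saddle periodic orbit $\gamma$, and then filling in the complement with a sink periodic orbit $O_1$, a source periodic orbit $O_2$, and a source singularity $s_-$ so that the dynamics closes up on $S^3$.

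First I would fix a Lorenz-like saddle $s_+$ with eigenvalues $\lambda_2<\lambda_3<0<\lambda_1$ satisfying $\lambda_1>-\lambda_3$; then $s_+$ supports a positively sectional-hyperbolic splitting (strong-stable line plus center-unstable plane), and since $-\lambda_2>\lambda_3$ is automatic, it also supports a negatively sectional-hyperbolic splitting (strong-unstable line plus center-stable plane). Next I would choose a hyperbolic saddle periodic orbit $\gamma$ and a tubular neighborhood $V$ of $\gamma$ on which local stable and unstable manifolds of $\gamma$ are well defined and meet prescribed cross sections transversally.

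On two regions $U_+,U_-\supset V$ I would then run the geometric-Lorenz construction twice: on $U_+$ produce a sectional-hyperbolic transitive invariant set $H_+$ realized as the homoclinic class of a saddle $p_+$, containing both $s_+$ and $\gamma$, and carrying a positively sectional-hyperbolic splitting; on $U_-$ run the analogous construction for $-X$ to produce $H_-$ as a homoclinic class containing $\gamma$ and carrying a negatively sectional-hyperbolic splitting. The return maps would be tuned so that inside $V$ the only orbit preserved by both templates is $\gamma$ itself, and outside $V$ the invariant parts of the two templates do not meet. I would then extend $X$ to all of $S^3$ by attaching $O_1$ as an attracting closed orbit absorbing forward escape, and $O_2$ together with $s_-$ playing the dual repelling role in backward time, interpolating via a Morse-type handle decomposition.

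The star property follows because every periodic orbit and singularity is hyperbolic by design and $C^1$-isolated, hence $C^1$-robustly so. The asymmetries ``$H_+$ is not negatively sectional-hyperbolic'' and ``$H_-$ is not positively sectional-hyperbolic'' are then automatic: if $H_+$ were also negatively sectional-hyperbolic, Corollary~\ref{th1} would force the transitive $H_+$ to be hyperbolic, which contradicts the presence in $H_+$ of the singularity $s_+$ together with regular orbits accumulating on it. The main obstacle is the gluing step that arranges $H_-\cap H_+=\gamma$: the two templates must be tuned so that outside $V$ their invariant sets genuinely separate while inside $V$ the only orbit fixed by both return maps is $\gamma$, since any additional heteroclinic tangency would either enlarge the intersection or destroy one of the sectional-hyperbolic splittings.
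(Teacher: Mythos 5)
Your overall strategy---glue a positively sectional-hyperbolic singular block to its time-reversed counterpart along a common periodic orbit and close up in $S^3$ with an attracting and a repelling closed orbit---is the same as the paper's, which realizes $H_+$ as the Labarca--Pacifico singular horseshoe \cite{lp} and $H_-$ as its time reversal inside one suspended Smale horseshoe. But the step you yourself flag as ``the main obstacle'' is not merely technical tuning; as stated, your construction would fail. If $H_+$ and $H_-$ were genuine geometric-Lorenz-type transitive blocks generated by saddles $p_+$ and $p_-$, and the shared orbit $\gamma$ were homoclinically related to both generators, the inclination lemma would give $W^u(p_-)\pitchfork W^s(p_+)\neq\emptyset$ and $W^s(p_-)\pitchfork W^u(p_+)\neq\emptyset$, hence $H_+=H_-$, contradicting the fact that one class is positively but not negatively sectional-hyperbolic and the other the reverse. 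So any correct construction must place $\gamma$ inside both classes while being homoclinically related to at most one generator. This is exactly what the singular horseshoe geometry achieves: the orbit of the point $p$ lies on the common boundary of the two sub-rectangles $Q_+$ and $Q_-$ of the cross-section and is accumulated by each class from one side only, the transversality being destroyed by the cuts and blow-ups produced by the Lorenz-like singularities. Your proposal contains no mechanism playing this role, and ``tuned so that the only orbit preserved by both templates is $\gamma$'' conceals precisely the point at issue.

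Two further gaps. First, the star property does not follow from ``every periodic orbit and singularity is hyperbolic by design and $C^1$-isolated, hence $C^1$-robustly so'': the star condition concerns all periodic orbits of all $C^1$-nearby flows, including orbits created by the perturbation, and hyperbolicity plus isolation of the orbits of $X$ itself gives no control over these. The paper handles this with explicit invariant expanding and contracting cone fields $C^u_1$, $C^s_1$ for the return map, which persist under $C^1$-small perturbation and preclude nonhyperbolic periodic orbits of any nearby flow. Second, your bookkeeping of singularities does not match the statement: in Theorem \ref{thB} the saddle $s_+$ is an auxiliary equilibrium forming its own chain class, disjoint from $H_+$; the Lorenz-like equilibria $\sigma_\pm$ lie inside $H_\pm$. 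Inserting a Lorenz-like equilibrium into a suspended horseshoe forces companion singularities (the source $s_-$ and the saddle $s_+$ of the paper), and omitting the latter leaves your flow on the solid torus incomplete. The part of your argument deducing ``$H_+$ not negatively sectional-hyperbolic'' from Corollary \ref{th1} and the presence of a singularity in the transitive class $H_+$ is correct and is the same argument as the paper's.
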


Recall that
the {\em nonwandering set} of a flow $X$ is defined as the set of points
$x\in M$ such that for every neighborhood $U$ of $x$ and $T>0$ there is $t\geq T$ satisfying
$X_t(U)\cap U\neq\emptyset$.
Given a certain subset $O$ of the space of $C^1$ flows,
we say that a $C^1$ generic flow in $O$ satisfies another property (Q)
if there is a residual subset of flows $R$ of $O$
such that every flow in $R$ satisfying (P) also satisfies (Q).

\medskip

There are two current conjectures relating star flows and sectional-hyperbolicity. These are
based on previous results in the literature e.g. \cite{gaw}, \cite{mp}.

\begin{conjecture}[Zhu-Shy-Gan-Wen \cite{sgw},\cite{zgw}]
\label{zgw}
The chain recurrent set of {\em every} star flow is
the {\em disjoint} union of a positively
sectional-hyperbolic set and a negatively sectional-hyperbolic set.
\end{conjecture}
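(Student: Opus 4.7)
The plan is to decompose $CR(X)$ into its chain recurrent classes, assign each class a sectional-hyperbolic sign, and then take unions to build the two required sets. First I would invoke the star-flow theory of Liao and Gan--Wen: for a $C^1$ star flow every chain recurrent class $C$ is compact and invariant, carries a dominated splitting $T_CM = E\oplus F$ with respect to the tangent flow, and the stable index $\dim E$ is constant along $C$ and coincides with the index of each periodic orbit contained in $C$. Since star flows have only finitely many chain recurrent classes, this reduces the global disjoint decomposition problem to the local problem of pointing each individual class in a consistent direction.

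Second, I would use the hyperbolic singularities and the $2$-Riemannian structure defined in the text to assign a sign to each class $C$. At a singularity $\sigma\in C$ that is Lorenz-like for $X$, the unstable subspace matches $F_\sigma$ and provides sectional expansion $\|DX_t u, DX_t v\| \geq K^{-1} e^{\lambda t}\|u,v\|$ for $u,v\in F_\sigma$. I would propagate this inequality along $C$ using invariance of the splitting together with a Liao selector or shadowing argument, producing a positively sectional-hyperbolic splitting on $C$; a dual construction at singularities Lorenz-like for $-X$ yields negatively sectional-hyperbolic splittings. Setting $\Lambda^+$ and $\Lambda^-$ to be the unions of the classes of each sign, the disjointness of chain recurrent classes as equivalence classes of the chain-related relation would give $CR(X) = \Lambda^+ \sqcup \Lambda^-$, while compactness and invariance would follow from finiteness of the decomposition.

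The hard part is precisely the second step: the sign assigned to $C$ must be unambiguous, and the two sides of the partition must actually be separated. A class $C$ containing one singularity Lorenz-like for $X$ and another Lorenz-like for $-X$ admits neither a positively nor a negatively sectional-hyperbolic splitting, since the required dimension of $F$ differs at the two singularities. Even when every class is internally consistent, two distinct chain recurrent classes of opposite sign could in principle share common recurrent points lying in both closures; by Corollary \ref{c3} such a shared piece, being both positively and negatively sectional-hyperbolic, would have to reduce to a nonsingular hyperbolic set together with finitely many singularities and regular connections, and by Corollary \ref{th1} any transitive piece of it would be hyperbolic, but this does not by itself prevent the piece from lying in two classes simultaneously. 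This is exactly the obstruction that Theorem \ref{thB} exploits to produce two homoclinic classes of opposite sign meeting in a single periodic orbit, showing that any completion of this program must impose additional hypotheses ruling out such ``mixed index'' singular configurations.
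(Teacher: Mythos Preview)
The fundamental problem is that you are attempting to prove a statement that the paper \emph{disproves}. Conjecture~\ref{zgw} is not a theorem of the paper; on the contrary, Theorem~\ref{thB} constructs an explicit counterexample and Corollary~\ref{c2} records that the chain recurrent set of the resulting star flow on $S^3$ is \emph{not} the disjoint union of a positively sectional-hyperbolic set and a negatively sectional-hyperbolic set. There is therefore no proof in the paper to compare your attempt against, and any purported proof must contain a genuine error, since the conjecture is false.

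Your outline in fact contains such an error, and it is worth isolating. You propose to assign a sign to each chain recurrent class and then use that distinct chain recurrent classes are disjoint. But in the counterexample the two homoclinic classes $H_-$ and $H_+$ satisfy $H_-\cap H_+\neq\emptyset$ (a periodic orbit), so every point of $H_-$ is chain related to every point of $H_+$, and $H_-\cup H_+$ is a \emph{single} chain recurrent class. This class contains a singularity $\sigma_+$ that is Lorenz-like for $X$ and a singularity $\sigma_-$ that is Lorenz-like for $-X$, so by your own observation no consistent sign can be assigned: it is neither positively nor negatively sectional-hyperbolic. Your third paragraph drifts toward this point but misidentifies the obstruction as two \emph{distinct} classes sharing boundary points; chain recurrent classes, being equivalence classes, are automatically disjoint, so that particular worry is vacuous. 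The real failure is internal to a single class, and it is fatal to the program rather than a technicality to be patched. Your closing sentence essentially concedes this, but then the honest conclusion is not ``additional hypotheses are needed'' but ``the conjecture is false as stated,'' which is precisely what the paper asserts.
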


\begin{conjecture}[Arbieto \cite{am}]
The nonwandering set of a {\em $C^1$ generic} star flow is the disjoint union
of finitely many transitive sets which are positively
or negatively sectional-hyperbolic.
\end{conjecture}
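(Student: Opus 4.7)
The plan is to construct $X$ by an explicit surgery gluing two geometric Lorenz-type templates of opposite orientation along a common hyperbolic periodic orbit $\gamma$, then filling the complement with a Morse--Smale skeleton.

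Let $T\subset S^3$ be the tubular neighborhood of a small unknotted circle, carrying a linear hyperbolic saddle flow whose only invariant set in $T$ is the core periodic orbit $\gamma$. On one side of $T$, attach a geometric Lorenz plug of Guckenheimer--Williams type on a closed $3$-ball $B_+\supset T$, designed so that the Lorenz template uses $\gamma$ as one of its periodic orbits and so that its maximal invariant set is a transitive positively sectional-hyperbolic homoclinic class $H_+\subset B_+$ containing a Lorenz-like singularity $\sigma_+$ of $X$. On the opposite side of $T$, attach a time-reversed Lorenz plug on a closed $3$-ball $B_-\supset T$ whose maximal invariant set, viewed as an invariant set of $X$, is a transitive negatively sectional-hyperbolic homoclinic class $H_-\subset B_-$ containing a Lorenz-like singularity $\sigma_-$ for $-X$ (a source for $X$), again with $\gamma$ among its periodic orbits. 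Arrange that $B_+\cap B_-=T$ and that the two plugs agree with the linear saddle flow on $T$.

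Extend $X$ to $S^3\setminus(B_+\cup B_-)$ as a Morse--Smale piece containing a hyperbolic attracting periodic orbit $O_1$ (absorbing forward escapes from $B_-$), a hyperbolic repelling periodic orbit $O_2$ (emitting backward escapes into $B_+$), a hyperbolic source singularity $s_-$, and a hyperbolic saddle singularity $s_+$, the separatrices of $s_+$ chosen heteroclinic only to $O_1$, $O_2$, $s_-$. Standard filtration arguments then yield
\[
CR(X)=O_1\sqcup O_2\sqcup\{s_-\}\sqcup\{s_+\}\sqcup H_-\sqcup H_+.
\]

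The sectional-hyperbolicity properties of $H_\pm$ come from the Lorenz template cone fields. That $H_+$ fails to be negatively sectional-hyperbolic follows because any such splitting would force area contraction of $X_t$ on a $2$-dimensional central subbundle containing the flow direction, contradicting the Lorenz-type eigenvalue inequality at $\sigma_+\in H_+$; a symmetric argument handles $H_-$. Since $H_+\subset B_+$ and $H_-\subset B_-$, one has $H_+\cap H_-\subset B_+\cap B_-=T$, and the only invariant set in $T$ is $\gamma$, hence $H_+\cap H_-=\gamma$. The star property follows because sectional-hyperbolicity of $H_\pm$ gives uniform hyperbolicity of every periodic orbit they contain, while $O_1,O_2,s_-,s_+$ are robustly hyperbolic, and both features persist under $C^1$-small perturbations.

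The principal obstacle is the compatible gluing along $T$: one must arrange that both Lorenz templates genuinely contain the same periodic orbit $\gamma$ as an honest periodic orbit of the flow (not merely as a class in the branched model), and that the return maps of the two templates to the cross-section of $T$ extend the linear saddle flow without creating new recurrence inside $B_+\cup B_-$. This is handled by designing each branched two-manifold so that $\gamma$ appears as a distinguished periodic orbit of its semi-flow, and then choosing the return maps on the entrance disk of $T$ to match the linear saddle on $T$, extending via the standard cone-field construction for geometric Lorenz attractors on each side.
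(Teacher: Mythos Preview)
There is a fundamental mismatch between the statement and your proposal. The statement is Arbieto's \emph{conjecture}: an assertion about the nonwandering set of a $C^1$ generic star flow. The paper does not prove this conjecture; it merely records it alongside Conjecture~\ref{zgw}. Your proposal is not an attempt to prove the conjecture at all --- it is a construction of a single star flow with prescribed features, which is the content of Theorem~\ref{thB}. A single example neither establishes nor refutes a statement about $C^1$ generic flows, so as a proof of the stated conjecture the proposal is simply off target.

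If one reads your proposal instead as an attempt at Theorem~\ref{thB}, it is in the right spirit but differs from the paper's construction and has some slips. The paper builds $X$ by gluing a singular horseshoe (in the sense of Labarca--Pacifico) with its time-reversed copy, starting from a suspended Smale horseshoe and inserting two Lorenz-like singularities $\sigma_\pm$; the common periodic orbit is the fixed saddle $p$ of the horseshoe, and the star property is read off from explicit contracting/expanding cone fields for the return map $R$ on the cross-section $Q$. Your proposal glues two geometric Lorenz templates along a shared periodic orbit, which is a different (and in practice more delicate) surgery: forcing a genuine \emph{common} periodic orbit inside two Lorenz attractors and matching the flows smoothly on a tubular neighborhood is exactly the point you flag as the ``principal obstacle,'' and you do not actually carry it out. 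Note also that your displayed decomposition $CR(X)=O_1\sqcup O_2\sqcup\{s_-\}\sqcup\{s_+\}\sqcup H_-\sqcup H_+$ cannot be a disjoint union, since by design $H_-\cap H_+=\gamma\neq\emptyset$; indeed that nontrivial intersection is the whole point of the example.
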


However, the union $H_-\cup H_+$ of the homoclinic classes $H_-$ and $H_+$ in Theorem \ref{thB}
is a chain recurrent class of the corresponding flow $X$ (because $H_-\cap H_+\neq\emptyset$). Therefore,
Theorem \ref{thB} gives a counterexample for Conjecture \ref{zgw} in dimension
$3$. Similar counterexamples can be obtained in dimension $\geq3$.

\begin{corollary}
\label{c2}
There is a star flow in $S^3$ whose chain recurrent set is not
the disjoint union of a positively sectional-hyperbolic set
and a negatively sectional-hyperbolic set.
\end{corollary}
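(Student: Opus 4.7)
The plan is to obtain Corollary \ref{c2} directly from Theorem \ref{thB} via a short argument by contradiction. Let $X$ be the star flow on $S^3$ produced by Theorem \ref{thB}, and suppose, against the conclusion, that
\[
CR(X)=\Lambda_+\sqcup\Lambda_-
\]
with $\Lambda_+$ positively sectional-hyperbolic, $\Lambda_-$ negatively sectional-hyperbolic, and $\Lambda_+\cap\Lambda_-=\emptyset$. The strategy is to locate the homoclinic classes $H_+$ and $H_-$ inside this decomposition and then exploit their nonempty intersection to derive a contradiction.

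First I would observe that each of $H_+,H_-$ lies in $CR(X)$ and is transitive (as a homoclinic class is transitive by Birkhoff-Smale). A standard partition argument then forces each class to lie entirely in $\Lambda_+$ or entirely in $\Lambda_-$: otherwise $H_\pm=(H_\pm\cap\Lambda_+)\sqcup(H_\pm\cap\Lambda_-)$ would decompose $H_\pm$ into two nonempty compact invariant subsets, and a dense orbit of $H_\pm$ would be trapped in one of them by invariance, contradicting its density. Next I would invoke the elementary fact that a compact invariant subset of a positively (resp.\ negatively) sectional-hyperbolic set is itself positively (resp.\ negatively) sectional-hyperbolic, since the sectional-hyperbolic splitting restricts and singularities remain hyperbolic. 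Since Theorem \ref{thB} asserts that $H_+$ is \emph{not} negatively sectional-hyperbolic, it cannot sit inside $\Lambda_-$; hence $H_+\subseteq\Lambda_+$, and symmetrically $H_-\subseteq\Lambda_-$.

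Finally, Theorem \ref{thB} supplies a periodic orbit contained in $H_+\cap H_-$, which under the inclusions just established would lie in $\Lambda_+\cap\Lambda_-$, violating the assumed disjointness. The only point where one has to be mildly careful is the transitivity-partition step in the previous paragraph; everything else is logical bookkeeping, so the corollary is essentially an immediate consequence of Theorem \ref{thB}, and I do not foresee any real technical obstacle.
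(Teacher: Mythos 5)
Your argument is correct, and it reaches the same contradiction the paper does (the periodic orbit in $H_-\cap H_+$ supplied by Theorem \ref{thB} cannot survive a disjoint decomposition), but the mechanism you use to place $H_+$ and $H_-$ inside the pieces is genuinely different from the paper's. The paper disposes of the corollary in one sentence by observing that $H_-\cup H_+$ is a single \emph{chain recurrence class} (precisely because $H_-\cap H_+\neq\emptyset$), and then reads Conjecture \ref{zgw} as requiring each such class to fall on one side of the decomposition. You instead avoid chain classes altogether: you use that each $H_\pm$ is transitive (Birkhoff--Smale), that a transitive compact invariant set cannot be partitioned into two nonempty disjoint compact invariant subsets, and that positive (resp.\ negative) sectional-hyperbolicity passes to compact invariant subsets, so that the ``not negatively'' and ``not positively'' clauses of Theorem \ref{thB} force $H_+\subseteq\Lambda_+$ and $H_-\subseteq\Lambda_-$. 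Your route is more self-contained and arguably more careful: the paper's chain-class step quietly assumes that a disjoint splitting of $CR(X)$ into two compact invariant sets must respect chain recurrence classes, which requires a small additional argument (pseudo-orbits are allowed to leave $CR(X)$), whereas your transitivity-partition step and the restriction property are both elementary and fully justified. The only cosmetic quibble is the phrase ``contradicting its density'': the trapped dense orbit does not produce a contradiction per se, it simply shows the other piece of the partition is empty, which is exactly the conclusion you need.
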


Another interesting feature regarding this counterexample
is the existence of a chain recurrent class without any nontrivial dominated splitting with respect
to the tangent flow. Moreover, every ergodic measure supported on this class is hyperbolic saddle.
These features are related to \cite{cs} or \cite{m}.
Notice also that the star flow in Corollary \ref{c2}
can be $C^1$ approximated by ones exhibiting the heteroclinic cycle
obtained by joinning the unstable manifold $W^u(\sigma_1)$ of $\sigma_1$ to the stable manifold $W^s(\sigma_2)$ of $\sigma_2$ in Figure \ref{fig1}.
Such a cycle was emphasized in the figure after the statement of
Lemma 3.3 in p.951 of \cite{zgw}.
This put in evidence the role of robust transitivity in the proof of such a lemma. 

\section*{Acknowledgments}

\noindent
This paper grew out of discussions between authors and participants of the
course {\em Topics in Dynamical Systems II} given at the Federal University of Rio de Janeiro, Brazil,
in the last half of 2014. The authors express their gratitute to these participants including professors J. Aponte, T. Catalan,
A.M. Lopez B. and H. Sanchez.
The results in this paper were announced in the
{\em I Workshop on Sectional-Anosov flows} which took place in September 22 of 2014 at the Federal University of Vi\c cosa-MG, Brasil.
The authors would like to thank professors E. Apaza and B. Mejia for the corresponding invitation.

\section{Proof of theorems \ref{th2} and \ref{th3}}

\noindent
First we prove Theorem \ref{th3}. For this we use the following technical definition.

\begin{defi}
A compact invariant set $\Lambda$ of a flow $X$ is {\em almost hyperbolic} if:
\begin{enumerate}
 \item Every singularity in $\Lambda$ is hyperbolic.
 \item There are continuous invariant subbundles $E^s, E^u$ of $T_\Lambda M$ such that
 $E^s$ is contracting, $E^u$ is expanding and
 $$
 T_{\Lambda^*}M=E^s\oplus E^X\oplus E^u.
 $$
\end{enumerate}
\end{defi}

Notice that this definition is symmetric with respect to the
reversing-flow operation.
Moreover, hyperbolic sets are almost hyperbolic but not conversely
by the example in Figure \ref{fig1}.
Likewise sectional-hyperbolic sets,
the almost hyperbolic sets satisfy

\begin{lemma}[Hyperbolic Lemma]
Every compact invariant subset without singularities of an almost
periodic set is hyperbolic.
\end{lemma}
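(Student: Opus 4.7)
The plan is to observe that the hyperbolic structure on $K$ is inherited from the almost hyperbolic structure on $\Lambda$ essentially for free, once one notices that the hypothesis ``no singularities'' is exactly what promotes the fiberwise flow direction $E^X$ to a genuine continuous invariant subbundle. So the proof should be a short unraveling of definitions.

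First, I would record the following continuity upgrade for $E^X$: since $K$ is compact and $K\cap \Sing(X)=\emptyset$, we have $\min_{x\in K}\|X(x)\|>0$. Therefore the assignment $x\mapsto E^X_x:=\mathbb{R}\cdot X(x)$ is a continuous, flow-invariant, one-dimensional subbundle of $T_K M$ (it may fail to extend continuously to singular points of $\Lambda$, but that is irrelevant on $K$).

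Second, since $K\subset \Lambda^*$, condition (2) in the definition of almost hyperbolic gives a direct-sum decomposition $T_K M=E^s|_K\oplus E^X|_K\oplus E^u|_K$. Each summand is continuous on $K$ (the flanking ones by hypothesis on $\Lambda$, and the middle one by the previous step), and the decomposition is invariant. Moreover the contraction estimate for $E^s$ and the expansion estimate for $E^u$ hold with uniform constants $K,\lambda$ on $\Lambda$, hence on $K$. This verifies all three conditions of the definition of hyperbolic set, so $K$ is hyperbolic.

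There is no real obstacle here; the only subtle point is the continuity of the global splitting, and that is precisely what the assumption $K\cap\Sing(X)=\emptyset$ is designed to supply, via the uniform lower bound on $\|X\|$ on the compact set $K$. In particular, this shows how the notion of almost hyperbolic set coincides with that of hyperbolic set once one can guarantee a continuous flow direction everywhere — a condition that fails at singularities (as illustrated by Figure \ref{fig1}) but holds trivially on any nonsingular compact invariant subset.
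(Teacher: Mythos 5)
Your proof is correct. The paper states this lemma without proof (treating it as immediate from the definitions, by analogy with the Hyperbolic Lemma for sectional-hyperbolic sets), and your argument --- restricting the continuous contracting and expanding subbundles to $K$ and observing that $K\cap\Sing(X)=\emptyset$ forces $\inf_{x\in K}\|X(x)\|>0$, so that $E^X$ is a continuous invariant line bundle on $K$ and the pointwise splitting of $T_{\Lambda^*}M$ restricts to a genuine continuous hyperbolic splitting of $T_KM$ with the same constants $K,\lambda$ --- is exactly the routine verification being omitted.
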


More properties will be obtained from the
lemma below. We denote by $B(x,\delta)$ the open $\delta$-ball operation, $\delta>0$.
If $\sigma\in Sing(X)$ is hyperbolic, then we denote
by $W^s_\delta(\sigma)$ (resp. $W^u_\delta(\sigma)$) the connected component
of $B(\sigma,\delta)\cap W^s(\sigma)$ (resp. $B(\sigma,\delta)\cap W^u(\sigma)$)
containing $\sigma$.

\begin{lemma}
\label{thA'}
For every almost hyperbolic set $\Lambda$ of a flow $X$
there is $\delta>0$ such that
$\Lambda\cap B(\sigma,\delta)\subset W^u_\delta(\sigma)\cup W^u_\delta(\sigma)$ for every
$\sigma\in \Sing(X)\cap \Lambda$.
\end{lemma}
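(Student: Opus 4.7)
The plan is to localize the problem at each singularity in $\Lambda$ and argue by contradiction. Since every $\sigma \in \Sing(X) \cap \Lambda$ is hyperbolic, it is isolated in $\Sing(X)$; combined with compactness of $\Lambda$, this forces $\Sing(X) \cap \Lambda$ to be finite. It therefore suffices to produce a local radius $\delta_\sigma > 0$ for each such $\sigma$ with $\Lambda \cap B(\sigma, \delta_\sigma) \subset W^s_{\delta_\sigma}(\sigma) \cup W^u_{\delta_\sigma}(\sigma)$ and then set $\delta = \min_\sigma \delta_\sigma$.

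Fix $\sigma$ and choose $\delta > 0$ small enough that $\sigma$ is the unique singularity in $\overline{B(\sigma, 2\delta)}$, the Hartman--Grobman theorem gives a linearizing chart there, and the local invariant manifolds $W^s_\delta(\sigma)$, $W^u_\delta(\sigma)$ satisfy the usual characterization: a point $x \in B(\sigma, \delta)$ lies in $W^s_\delta(\sigma)$ (resp. $W^u_\delta(\sigma)$) iff its forward (resp. backward) semi-orbit stays inside $\overline{B(\sigma, \delta)}$. Assume for contradiction that there is a sequence $x_n \in \Lambda \cap B(\sigma, 1/n)$ with $x_n \notin W^s_\delta(\sigma) \cup W^u_\delta(\sigma)$. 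Then the forward orbit first exits $\overline{B(\sigma, \delta)}$ at some time $T_n > 0$, and $T_n \to \infty$ since $x_n \to \sigma$; along a subsequence, the exit points $y_n := X_{T_n}(x_n)$ converge to some $y_\infty \in \Lambda \cap \partial B(\sigma, \delta)$. The identity $X_{-t}(y_\infty) = \lim_n X_{T_n - t}(x_n) \in \overline{B(\sigma, \delta)}$, valid for every fixed $t \geq 0$, shows that the full backward semi-orbit of $y_\infty$ remains in $\overline{B(\sigma, \delta)}$, so $y_\infty \in W^u_\delta(\sigma)$. The symmetric argument applied to the backward orbit produces $S_n \to \infty$ and $z_n := X_{-S_n}(x_n) \to z_\infty \in W^s_\delta(\sigma) \cap \partial B(\sigma, \delta)$.

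To close the argument I would exploit the three-way direct sum $T_{x_n} M = E^s_{x_n} \oplus E^X_{x_n} \oplus E^u_{x_n}$ together with the continuous extensions of $E^s$ and $E^u$ to $\sigma$; the contraction/expansion estimates at $\sigma$ force the inclusions $E^s_\sigma \subset E^{ss}_\sigma$ and $E^u_\sigma \subset E^{uu}_\sigma$ into the hyperbolic splitting $T_\sigma M = E^{ss}_\sigma \oplus E^{uu}_\sigma$ of the singularity. Looking at the unit flow-direction vectors $e_n := X(x_n)/\|X(x_n)\| \in E^X_{x_n}$ and using the invariance $DX_{T_n}(x_n) X(x_n) = X(y_n)$ together with $\|X(x_n)\| \to 0$ while $\|X(y_n)\|$ is bounded below (regularity of $y_\infty$), one controls subsequential limits of $e_n$ in the linearizing chart: the forward dynamics forces any such limit to carry a non-negligible component in $E^{uu}_\sigma$, while the symmetric backward analysis at $z_\infty$ forces the same limit into $E^{ss}_\sigma$, and the continuity of $E^s$, $E^u$ together with the directness of the splitting along $x_n$ then produces a contradiction with $E^{ss}_\sigma \cap E^{uu}_\sigma = 0$.

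The main obstacle is precisely this last step: rigorously ruling out the coexistence of nontrivial $E^{ss}_\sigma$- and $E^{uu}_\sigma$-components in the limit direction of $e_n$ while the almost hyperbolic splitting at $x_n$ remains genuinely direct. This amounts to a careful dimension and transversality bookkeeping in the linearized chart at $\sigma$, using that $\dim E^s_\sigma + 1 + \dim E^u_\sigma = \dim M$ by continuity across $\sigma$ and that the flow-direction subbundle $E^X$ on $\Lambda^*$ cannot collapse into $E^s_\sigma \oplus E^u_\sigma$ without violating the direct sum condition; this is where the hyperbolicity of $\sigma$ must be brought to bear in its full strength.
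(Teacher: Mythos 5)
Your reduction and the first half of your argument coincide with the paper's: finiteness of $\Sing(X)\cap\Lambda$, the contradiction sequence $x_n\to\sigma$ with $x_n\notin W^s(\sigma)\cup W^u(\sigma)$, and the tube-lemma step producing points $x_n^u\to y^u\in W^u_\delta(\sigma)\setminus\{\sigma\}$ and $x_n^s\to y^s\in W^s_\delta(\sigma)\setminus\{\sigma\}$ on the orbit of $x_n$ are exactly what the paper does. You also correctly record $\dim(E^s_\sigma)+\dim(E^u_\sigma)=\dim(M)-1$ and the inclusions $E^s_\sigma\subset F^s_\sigma$, $E^u_\sigma\subset F^u_\sigma$. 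The gap is the closing step, which you yourself flag as unresolved, and the route you sketch for it does not work: you try to derive the contradiction from the limit of the flow directions $e_n=X(x_n)/\|X(x_n)\|$ at $\sigma$, invoking ``the direct sum condition.'' But the splitting $TM=E^s\oplus E^X\oplus E^u$ is only hypothesized over $\Lambda^*$, and $E^X_\sigma=0$; nothing in the definition prevents a subsequential limit of $E^X_{x_n}$ from lying inside $E^s_\sigma\oplus E^u_\sigma$ (a codimension-one subspace of $T_\sigma M$), so no contradiction is available at $\sigma$ itself.

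The paper closes the argument at the regular limit points $y^s$, $y^u$ instead, via a dichotomy you do not isolate. From $\dim(E^s_\sigma)+\dim(E^u_\sigma)=\dim(M)-1$, $\dim(F^s_\sigma)+\dim(F^u_\sigma)=\dim(M)$ and the two inclusions, a short counting argument forces $\dim(E^s_\sigma)=\dim(F^s_\sigma)$ or $\dim(E^u_\sigma)=\dim(F^u_\sigma)$. Say the first holds. For $y\in\Lambda\cap(W^s(\sigma)\setminus\{\sigma\})$ close to $\sigma$, continuity of $\dim E^s$ together with the inclusion $E^s_y\subset T_yW^s(\sigma)$ (which follows from contraction) gives $E^s_y=T_yW^s(\sigma)$; since $W^s(\sigma)$ is flow-invariant, $E^X_y\subset T_yW^s(\sigma)=E^s_y$. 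Applying this at the regular point $y^s$ forces $E^X_{y^s}=0$ by directness of the splitting on $\Lambda^*$ --- the desired contradiction (and symmetrically at $y^u$ in the other case). This is the missing ingredient: the contradiction must be extracted where the three-way splitting actually holds, namely at a regular point of $\Lambda$ trapped on the local stable (or unstable) manifold, and it requires the equality of dimensions $\dim(E^s_\sigma)=\dim(F^s_\sigma)$ (or its unstable analogue), not merely the inclusions you state.
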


\begin{proof}
It suffices to prove that if $x_n\in \Lambda^*$ is a sequence converging to some singularity $\sigma\in \Lambda$,
then $x_n\in W^s(\sigma)\cup W^u(\sigma)$ for $n$ large enoch.

Let $T_\sigma M=F^s_\sigma\oplus F^u_\sigma$
be the hyperbolic splitting of $\sigma$.
By definition
$T_{x_n}M=E^s_{x_n}\oplus E_{x_n}^X\oplus E^u_{x_n}$ so
$$
\dim(E^s_{x_n})+\dim(E^u_{x_n})=\dim(M)-1,
\quad\quad\forall n.
$$
Passing to the limit we obtain
$$
\dim(E^s_{\sigma})+\dim(E^u_{\sigma})=\dim(M)-1.
$$
Since $E^s_\sigma$ and $E^u_\sigma$ are contracting and expanding respectively, we obtain
$E^s_\sigma\subset F^s_\sigma$ and $E^u_\sigma\subset F^u_\sigma$.

If $\dim(F^s_\sigma)>\dim(E^s_\sigma)+1$ we would have
$$
\dim(E^u)=\dim(M)-1-\dim(E^s_\sigma)>\dim(M)-\dim(F^s_\sigma)=\dim(F^u_\sigma),
$$
which is impossible.
Then $\dim(F^s_\sigma)\leq \dim(E^s_\sigma)+1$.
Analogously, $\dim(F^u_\sigma)\leq \dim(E^u_\sigma)+1$.
Therefore, $\dim(F^s_\sigma)=\dim(E^s_\sigma)$ or $\dim(E^s_\sigma)+1$.
Analogously $\dim(F^u_\sigma)=\dim(E^u_\sigma)$ or $\dim(E^u_\sigma)+1$.

But we cannot have $\dim(F^s_\sigma)=\dim(E^s_\sigma)+1$ and $\dim(F^u_\sigma)=\dim(E^u_\sigma)+1$
simultaneously because
$$
\dim(M)=\dim(F^s_\sigma)+\dim(F^u_\sigma)
=\dim(E^s_\sigma)+\dim(E^u_\sigma)+2=\dim(M)+1
$$
which is absurd.
All together imply
$$
\dim(E^s_\sigma)=\dim(F^s_\sigma)\quad \quad\mbox{ or } \quad\quad  \dim(E^u_\sigma)=\dim(F^u_\sigma).
$$

Suppose $\dim(E^s_\sigma)=\dim(F^s_\sigma)$. If $y\in \Lambda\cap(W^s(\sigma)\setminus\{\sigma\})$ is sufficiently close to
$\sigma$, then
$\dim(E^s_y)=\dim(E^s_\sigma)=\dim(F^s_\sigma)=\dim(T_yW^s(\sigma))$.

On the other hand, $E^s$ is contracting thus
$E^s_y\subset T_yW^s(\sigma)$.
From these remarks we obtain that if $\dim(E^s_\sigma)=\dim(F^s_\sigma)$, then
$E^s_y=T_yW^s(\sigma)$ for all $y\in \Lambda\cap( W^s(\sigma)\setminus\{\sigma\})$ close to $\sigma$.
Analogously if $\dim(E^u_\sigma)=\dim(F^u_\sigma)$,
then $E^u_y=T_yW^u(\sigma)$ for all $y\in \Lambda\cap(W^u(\sigma)\setminus\{\sigma\})$ close to $\sigma$.

Now suppose by contradiction that $x_n\not\in W^s(\sigma)\cup W^u(\sigma)$
for all $n$ (say).
Then, by flowing the orbit of $x_n$ nearby $\sigma$, as described in Figure \ref{fig2}, we obtain
two sequences $x_n^s,x^u_n$ in the orbit of $x_n$
such that $x^s_n\to y^s$ and $x^u_n\to y^u$ for some
$y^s\in W^s(\sigma)\setminus \{\sigma\}$ and $y^u\in W^u(\sigma)\setminus\{\sigma\}$
close to $\sigma$.

\begin{figure}[htv] 
\begin{center}
\input{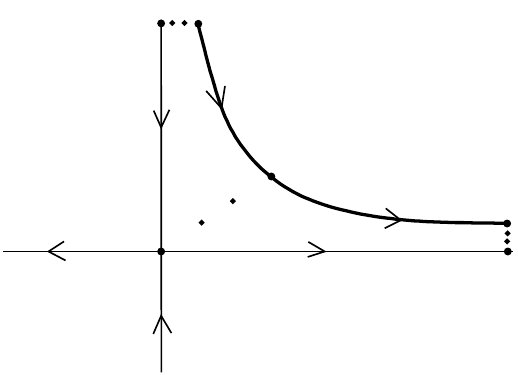_t} 
\caption{Proof of Lemma \ref{thA'}}\label{fig2}
\end{center}
\end{figure}

If $\dim(E^s_\sigma)=\dim(F^s_\sigma)$ then $E^s_{y^s}=T_{y^s}W^s(\sigma)$
but also $E^X_{y^s}\subset T_{y^s}W^s(\sigma)$ since $W^s(\sigma)$ is an invariant manifold.
Therefore, $E^X_{y^s}\subset E^s_{y^s}$ and then $E^X_{y^s}=0$ since the sum
$T_{y^s}M=E^s_{y^s}\oplus E^X_{y^s}\oplus E^u_{y^s}$ is direct. This is a contradiction.
Analogously we obtain a contradiction if $\dim(E^u_\sigma)=\dim(F^u_\sigma)$
and the proof follows.
\end{proof}

Now we relate sectional and almost hyperbolicity.

\begin{lemma}
\label{thA}
Every compact invariant set which is both positively and negatively sectional-hyperbolic is almost hyperbolic.
\end{lemma}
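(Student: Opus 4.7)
Denote the given positively and negatively sectional-hyperbolic splittings by
\[
T_\Lambda M=E^+\oplus F^+\qquad\text{and}\qquad T_\Lambda M=F^-\oplus E^-,
\]
so that $E^+$ is $X$-contracting, $E^-$ is $X$-expanding (both being ``stable'' for the respective flow), $F^+$ is $X$-sectionally expanding and $F^-$ is $X$-sectionally contracting. I propose to take $E^s:=E^+$ and $E^u:=E^-$ and verify that they exhibit $\Lambda$ as almost hyperbolic. Every singularity in $\Lambda$ is hyperbolic by hypothesis and $E^s,E^u$ are continuous, invariant, contracting and expanding respectively, so everything reduces to checking the decomposition $T_{\Lambda^*}M=E^s\oplus E^X\oplus E^u$.

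First I would record the easy transversalities. The contracting-versus-expanding clash gives $E^s\cap E^u=0$ on $\Lambda$. At a regular $x$, if $X(x)=e^s+e^u$ with $e^s\in E^s_x$ and $e^u\in E^u_x$, then $X(X_{\pm t}(x))=DX_{\pm t}(e^s+e^u)$ stays bounded by compactness of $\Lambda$, yet $\|DX_te^u\|$ and $\|DX_{-t}e^s\|$ grow exponentially; hence $e^s=e^u=0$ and $E^X_x\cap(E^s_x\oplus E^u_x)=0$. Next I would prove the two inclusions $E^u\subset F^+$ and $E^s\subset F^-$. For the first: given $v\in E^u_x$, write $v=e+f$ along the positive splitting; the projection onto $E^+$ along $F^+$ commutes with $DX_{-t}$, so $DX_{-t}e$ is the $E^+$-component of $DX_{-t}v$ and its norm is bounded by a uniform multiple of $\|DX_{-t}v\|\le Ke^{-\lambda t}\|v\|$. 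On the other hand backward expansion of $E^+$ forces $\|DX_{-t}e\|\ge K^{-1}e^{\lambda t}\|e\|$, so $e=0$. The companion inclusion $E^s\subset F^-$ is obtained by the symmetric forward-time argument.

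Since $E^+\subset F^-$ and $T_\Lambda M=E^+\oplus F^+$, the sum satisfies $F^++F^-\supseteq E^++F^+=T_\Lambda M$, so
\[
\dim(F^+_x\cap F^-_x)\;=\;\dim M-\dim E^s_x-\dim E^u_x\qquad(x\in\Lambda).
\]
The crux, and the only step that really uses both sectional hypotheses together, is to estimate this intersection through the $2$-norm. For any $u,v\in F^+_x\cap F^-_x$, the sectional expansion of $F^+$ and the sectional contraction of $F^-$ yield simultaneously
\[
K^{-1}e^{\lambda t}\|u,v\|\;\le\;\|DX_tu,DX_tv\|\;\le\;Ke^{-\lambda t}\|u,v\|,\qquad t\ge0,
\]
which forces $\|u,v\|=0$. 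Hence any two vectors in $F^+_x\cap F^-_x$ are linearly dependent, so $\dim(F^+_x\cap F^-_x)\le1$ and $\dim E^s_x+\dim E^u_x\ge\dim M-1$. At a regular $x$ the transversality $E^X_x\cap(E^s_x\oplus E^u_x)=0$ precludes the sum from equalling $\dim M$, so $\dim E^s_x+\dim E^u_x=\dim M-1$ on $\Lambda^*$; combined with the direct-sum structure already established, this gives $E^s_x\oplus E^X_x\oplus E^u_x=T_xM$, which is condition (2) of the definition. The delicate point is the sectional collapse of $F^+\cap F^-$: neither splitting on its own controls this intersection, and it is only the combination of expansion coming from one splitting with contraction coming from the other that pins its dimension down to at most one.
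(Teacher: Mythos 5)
Your proof is correct and follows essentially the same route as the paper's: both take the almost-hyperbolic bundles to be the contracting bundle of the positive splitting and the expanding bundle of the negative one, establish the inclusions of each of these into the other splitting's central bundle, and then pin down the dimensions through the clash between sectional expansion and sectional contraction on the intersection of the two central subbundles. The differences are only in how intermediate steps are justified --- you prove $E^X_x\cap(E^s_x\oplus E^u_x)=0$ and the inclusions $E^u\subset F^+$, $E^s\subset F^-$ by direct norm and projection estimates where the paper invokes Lemma 3.2 of \cite{as} and an angle-plus-domination argument, and you bound $\dim(F^+_x\cap F^-_x)\le 1$ by linear dependence rather than identifying that intersection with $E^X_x$ --- so the core mechanism is identical.
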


\begin{proof}
Let $\Lambda$ be a compact invariant set which is
both positively and negatively sectional-hyperbolic.
Then, every singularity in $\Lambda$ is hyperbolic.
Moreover, there are positively and negatively sectional-hyperbolic splittings
$$
T_\Lambda M=E^s\oplus E^{se},
\quad\mbox{ and }\quad
T_\Lambda M=\hat{E}^s\oplus \hat{E}^{se},
$$
Taking $E^u=\hat{E}^s$ and $E^{sc}=\hat{E}^{se}$ we obtain an expanding and a sectional contracting subbundles of $T_\Lambda M$.
Since $E^s$ is contracting, we have $E^X\subset E^{se}$ by Lemma 3.2 in \cite{as}.
Similarly, $E^X\subset E^{sc}$ so
$$
E^X\subset E^{se}\cap E^{sc}.
$$

On the other hand, since $E^s$ is contracting and $E^u$ expanding,
the angle $\langle E^s, E^u\rangle$ is bounded away from zero.
Then, the dominating condition implies
$$
E^u\subset E^{se} \quad\quad\mbox{ and } \quad\quad E^s\subset E^{sc}.
$$
From this we have
$T_\Lambda M=E^{se}+ E^{sc}$
and so
$$
\dim(M)=\dim(E^{se})+\dim(E^{sc})-\dim(E^{se}\cap E^{sc}).
$$

At regular points we cannot have a vector outside $E^X$ contained in $E^{se}\cap E^{sc}$. Then,
$$
E^X=E^{se}\cap E^{sc}
\quad\quad\mbox{ and so }\quad\quad \dim(E^{se}\cap E^{sc})=1
$$
in $\Lambda^*$.
Replacing above we get
$$
\dim(M)=\dim(E^{se})+\dim(E^{sc})-1.
$$
But we also have
$\dim(M)=\dim(E^u)+\dim(E^{sc})$
so
$$
\dim(E^u)=\dim(E^{se})-1.
$$
Since $E^u$ is expanding, we have $E^u\cap E^X=\{0\}$ thus
$$
T_{\Lambda^*} M=E^s\oplus E^X\oplus E^u
$$
proving the result.
\end{proof}

\begin{proof}[Proof of Theorem \ref{th1}]
Let $\Lambda$ be the intersection of a positively and a negatively sectional-hyperbolic set of a flow $X$.
Then, it is both positively and negatively sectional-hyperbolic
and so almost hyperbolic by Lemma \ref{thA}.
From this we can select
$\delta>0$ as in Lemma \ref{thA'}. Clearly we can take $\delta$ such that
the balls $B(\sigma,\delta)$ are pairwise disjoint for $\sigma\in S$,
where $S=\Sing(X)\cap \Lambda$.

Define
$$
H=\displaystyle\bigcap_{(t,\sigma)\in\mathbb{R}\times S}X_t(\Lambda\setminus B(\sigma,\delta))
$$
and $R=\Lambda\setminus (H\cup S)$.

Clearly $S$ consists of finitely many singularities.
Moreover, $H$ is nonsingular hence hyperbolic by the Hyperbolic Lemma.
Now take $x\in R$.
Then, there is $(t,\sigma)\in\mathbb{R}\times S$ such that $X_t(x)\in B(\sigma,\delta)$.
By Lemma \ref{thA'} we obtain $X_t(x)\in W^s(\sigma)\cup W^u(\sigma)$ hence
$x\in W^s(\sigma)\cup W^u(\sigma)$.

If $x\in W^s(\sigma)$ we obtain $\omega(x)\subset H\cup S$.
If $X_r(x)\notin \cup_{\sigma\in S}B(\sigma,\delta)$ for all $r\leq 0$ then
$\alpha(x)\subset H$.
Otherwise, there is $(r,\rho)\in \mathbb{R}\times S$ such that
$X_r(x)\in B(\rho,\delta)$
and so $x\in W^u(\rho)$. All together yields $\alpha(x)\subset H\cup S$.
Similarly we have $\alpha(x)\subset H\cup S$ and
$\omega(x)\subset H\cup S$ if $x\in W^u(\sigma)$ and the result follows.
\end{proof}

To prove Theorem \ref{th2} we use the following lemma.
Recall that an invariant set is {\em nontrivial} if it does not reduces to a single orbit.

\begin{lemma}
\label{palilla}
Let $\Lambda$ be a nontrivial transitive positively sectional-hyperbolic set of a flow $X$.
If $\sigma\in\Sing(X)\cap \Lambda$, then the hyperbolic and the respective hyperbolic and positively sectional-hyperbolic splittings
$T_\sigma M=F^s_\sigma\oplus F^u_\sigma$ and $T_\sigma M=E^s_\sigma\oplus E^{se}_\sigma$ of $\sigma$
satisfy $\dim(E^{se}_\sigma\cap F^s_\sigma)=1$.
\end{lemma}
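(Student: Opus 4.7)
The plan is to prove the equality $\dim(E^{se}_\sigma\cap F^s_\sigma)=1$ by establishing $\leq 1$ and $\geq 1$ separately. For the upper bound I would argue by contradiction: suppose $\Pi\subset E^{se}_\sigma\cap F^s_\sigma$ is a $2$-plane and pick an orthonormal pair $u,v\in\Pi$, so $\|u,v\|=1$. Since $\Pi\subset F^s_\sigma$, the hyperbolic splitting of $\sigma$ supplies constants $C,\mu>0$ with $\|DX_t(\sigma)u\|,\|DX_t(\sigma)v\|\leq Ce^{-\mu t}$ for $t\geq 0$, hence $\|DX_t(\sigma)u,DX_t(\sigma)v\|\leq \|DX_t(\sigma)u\|\cdot\|DX_t(\sigma)v\|\leq C^2 e^{-2\mu t}$, which tends to $0$ as $t\to\infty$. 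On the other hand, sectional expansion on $E^{se}_\sigma$ applied to the pair $u,v$ yields $\|DX_t(\sigma)u,DX_t(\sigma)v\|\geq K^{-1}e^{\lambda t}\to\infty$, a contradiction.

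For the lower bound, the crux is to produce a point $q\in W^s(\sigma)\cap\Lambda^*$. Since $\Lambda$ is nontrivial and transitive, there is $x\in\Lambda$ with $\omega(x)=\Lambda$, and the orbit of $x$ must accumulate on $\sigma$ without converging to it (otherwise $\Lambda=\{\sigma\}$). Pick regular points $y_n=X_{t_n}(x)\to\sigma$, fix a small $\delta>0$, and for each $n$ let $\tau_n\geq 0$ be the first backward-escape time of $y_n$ from $B(\sigma,\delta)$; set $z_n=X_{-\tau_n}(y_n)\in\Lambda\cap\partial B(\sigma,\delta)$. By compactness I may assume $z_n\to z\in\Lambda\cap\partial B(\sigma,\delta)$. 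If $\{\tau_n\}$ stayed bounded, passing to the limit in $X_{\tau_n}(z_n)=y_n\to\sigma$ would force $z$ onto the orbit of $\sigma$, contradicting $z\in\partial B(\sigma,\delta)$. Hence $\tau_n\to\infty$, which is exactly the statement that $z\in W^s(\sigma)\cap\Lambda^*$.

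With $q:=z$ in hand, the forward orbit $X_t(q)$ lies in $\Lambda^*$ and converges to $\sigma$, so $T_{X_t(q)}W^s(\sigma)\to T_\sigma W^s(\sigma)=F^s_\sigma$ by smoothness of $W^s(\sigma)$, while $E^{se}_{X_t(q)}\to E^{se}_\sigma$ by continuity of the sectional-hyperbolic splitting. Using flow-invariance of $W^s(\sigma)$ together with the inclusion $E^X\subset E^{se}$ on $\Lambda^*$ (Lemma~3.2 of \cite{as}), the flow direction $X(X_t(q))$ belongs to $T_{X_t(q)}W^s(\sigma)\cap E^{se}_{X_t(q)}$ for every $t$; normalizing $v_t:=X(X_t(q))/\|X(X_t(q))\|$ and extracting a subsequential limit as $t\to\infty$ yields a unit vector in $F^s_\sigma\cap E^{se}_\sigma$, proving $\dim(E^{se}_\sigma\cap F^s_\sigma)\geq 1$. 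I expect the construction of $q$ to be the main technical point: the escape-time argument has to rule out the degenerate case in which the limit $z$ ends up on the orbit of $\sigma$ itself. Once $q$ is available, the remaining pieces combine cleanly from sectional expansion on $E^{se}$, contraction on $F^s_\sigma$, and continuity of the relevant sub-bundles.
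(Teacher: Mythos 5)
Your proof is correct, and the two halves merit separate comments. The upper bound $\dim(E^{se}_\sigma\cap F^s_\sigma)\le 1$ is exactly what the paper does (it simply says ``by sectional expansivity''); your orthonormal-pair computation, combining the contraction on $F^s_\sigma$ with the inequality $\|u,v\|\le\|u\|\,\|v\|$, is the standard way to spell that out. For the lower bound you take a genuinely different route. The paper argues by contradiction: it first proves $E^s_\sigma\neq F^s_\sigma$ (using a regular point of $\Lambda$ on $W^s(\sigma)$ together with $E^X\subset E^{se}$), and then shows that $\dim(E^{se}_\sigma\cap F^s_\sigma)=0$ would force, via domination (which places $F^u_\sigma$ inside $E^{se}_\sigma$) and a dimension count, the forbidden equality $E^s_\sigma=F^s_\sigma$. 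You instead exhibit a nonzero vector of $E^{se}_\sigma\cap F^s_\sigma$ directly, as a subsequential limit of normalized flow directions along the forward orbit of a point $q\in W^s(\sigma)\cap\Lambda^*$; this uses the same two ingredients (a regular point of $\Lambda$ on the stable manifold, and $E^X\subset E^{se}$) but bypasses the domination and dimension-count step entirely, which is a small but genuine simplification. Both proofs hinge on producing such a point $q$; the paper only gestures at its existence (``see Figure 2''), whereas your backward-escape-time construction makes it explicit. The one step you state too casually is the implication ``$\tau_n\to\infty$, hence $z\in W^s(\sigma)$'': what the limit actually yields is that the forward orbit of $z$ remains in $\overline{B(\sigma,\delta)}$ for all $t\ge0$, and you then need the standard characterization of the local stable manifold of a hyperbolic singularity (a point whose entire forward orbit is trapped in a sufficiently small ball around $\sigma$ must lie on $W^s_{\delta}(\sigma)$) to conclude. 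That is a classical fact rather than a gap, but it is not literally ``exactly the statement'' that $z\in W^s(\sigma)\cap\Lambda^*$.
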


\begin{proof}
Clearly $E^s_\sigma\subset F^s_\sigma$.
Suppose for a while that $E^s_\sigma=F^s_\sigma$.
Then, $\dim(E^s_y)=\dim(T_yW^s(\sigma))$ for every $y\in \Lambda\cap W^s(\sigma)$ close to $\sigma$.
As clearly $E^s_y\subset T_yW^s(\sigma)$ for all such points $y$, we obtain
$E^s_y=T_yW^s(\sigma)$ for every $y\in \Lambda\cap W^s(\sigma)$ close to $\sigma$.
On the other hand, we also have that $E^X_y\subset T_y W^s(\sigma)$ for all such points $y$.
From this we conclude that $E^X_y\subset E^s_y$ for every point $y\in\Lambda\cap W^s(\sigma)$ close to $\sigma$.
Now we observe that since $\Lambda$ is transitive we obtain $E^X\subset E^{se}$.
Using again that $\Lambda$ is nontrivial transitive (see Figure \ref{fig2}) we obtain $y=y^s\in \Lambda^*\cap W^s(\sigma)$
close to $\sigma$. For such a point we obtain $0\neq E^X_y\subset E^s_y\cap E^{se}_y$ which is absurd.
Therefore, $E^s_\sigma\neq F^s_\sigma$.

Next we observe that $\dim(E^{se}_\sigma\cap F^s_\sigma)\leq1$ by sectional expansivity.
Suppose for a while that $\dim(E^{se}_\sigma\cap F^s_\sigma)=0$.
Clearly $E^s_\sigma\cap F^u_\sigma=0$ and so $F^u_\sigma\subset E^{se}_\sigma$
by domination.
From this we obtain $T_\sigma M=E^{se}_\sigma\oplus F^s_\sigma$
thus
$\dim(E^{se}_\sigma)+\dim(F^s_\sigma)=\dim(M)=\dim(F^s_\sigma)+\dim(F^u_\sigma)$ yielding
$\dim(E^{se}_\sigma)=\dim(F^u_\sigma)$ so $E^{se}_\sigma=F^u_\sigma$ thus $E^s_\sigma=F^s_\sigma$ which is absurd.
Therefore, $\dim(E^{se}_\sigma\cap F^s_\sigma)=1$ and we are done.
\end{proof}

\begin{proof}[Proof of Theorem \ref{th2}]
Let $\Lambda_+$ and $\Lambda_-$ be transitive sets of a flow $X$ such
that $\Lambda_+$ is positively sectional hyperbolic and $\Lambda_-$ is negatively sectional-hyperbolic.
If one of these sets reduces to a single orbit, then the intersection $\Lambda_-\cap \Lambda$ reduces to
that orbit and the result follows.

So, we can assume both $\Lambda_+$ and $\Lambda_-$ are nontrivial.
Let $T_{\Lambda_+}M=E^s\oplus E^{se}$ and $T_{\Lambda_-}M=\hat{E}^s\oplus \hat{E}^{se}$
be the positively and negatively sectional-hyperbolic splittings
of $\Lambda_+$ and $\Lambda_-$ respectively.
Denoting $E^u=\hat{E}^s$ and $E^{sc}=\hat{E}^{se}$ we obtain an expanding subbundle and a
sectionally contracting subbundle of $T_\Lambda M$.

Suppose for a while that there is $\sigma\in \Lambda_-\cap\Lambda_+\cap\Sing(X)$.
By Lemma \ref{palilla} applied to $X$, we have that $\sigma$ has a real negative eigenvalues $\lambda^s$
corresponding to the one-dimensional eigendirection $E^{se}_\sigma\cap F^s_\sigma$.
Similarly, applying the lemma to $-X$, we obtain a real positive eigenvalue $\lambda^u$
corresponding to the one-dimensional eigendirection $E^{sc}_\sigma\cap F^u_\sigma$.

Take unitary vectors $v^s\in E^{se}_\sigma\cap F^s_\sigma$ and $v^u\in E^{sc}_\sigma\cap F^u_\sigma$.
Since
$$
(E^{se}_\sigma\cap F^s_\sigma)\cap (E^{sc}_\sigma\cap F^u_\sigma)\subset F^s_\sigma\cap F^u_\sigma=0,
$$
we have that $v^s$ and $v^u$ are linearly independent. Then,
$\|v^s,v^u\|\neq0$.
Since $F^u_\sigma\subset E^{se}_\sigma$, we have $v^s,v^u\in E^{se}_\sigma$
so
$$
e^{\lambda^st}e^{\lambda^ut}\|v^s,v^u\|=\|DX_t(\sigma)v^s,DX_t(\sigma)v^u\|\to\infty \quad\mbox{ as }\quad t\to\infty
$$
by sectionally expansiveness. Then
$$
\lambda^s+\lambda^u>0.
$$
Similarly, since $F^s_\sigma\subset E^{sc}_\sigma$, we have $v^s,v^u\in E^{sc}_\sigma$
so
$$
e^{-\lambda^st}e^{-\lambda^ut}\|v^s,v^u\|=\|DX_{-t}(\sigma)v^s,DX_{-t}(\sigma)v^u\|\to\infty \quad\mbox{ as }\quad t\to\infty
$$
by sectionally expansiveness with respect to $-X$. Then,
$$
\lambda^s+\lambda^u<0
$$
which is absurd. We conclude that $\Lambda_-\cap\Lambda_+\cap\Sing(X)=\emptyset$.
Now we can apply the hyperbolic lemma for sectional-hyperbolic sets to obtain
that $\Lambda_-\cap\Lambda_+$ is hyperpolic. This finishes the proof.
\end{proof}

\section{Proof of Theorem \ref{thB}}

\noindent
Roughly speaking, the proof consists of glueing the so-called {\em singular horseshoe} \cite{lp}
with its time reversed counterpart.

\begin{figure}[h] 
\begin{center}
\input{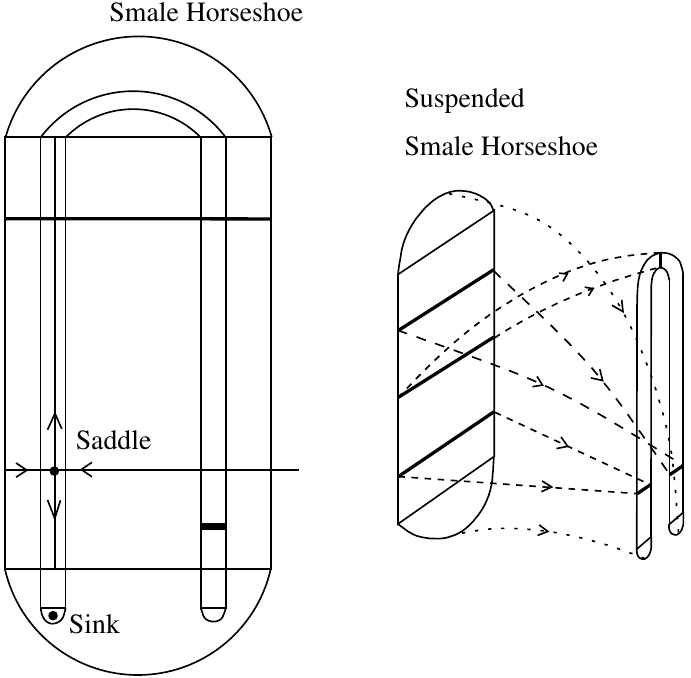_t} 
\caption{}\label{fig3}
\end{center}
\end{figure}

We star with the standard {\em Smale horseshoe} which is the map in the 2-disk on the left of Figure \ref{fig3}.
It turns out that its nonwandering set consists of a sink and a hyperbolic homoclinic class
containing the saddle. Its suspension is the flow described in the right-hand picture of the figure.
It is a flow in the solid torus whose nonwandering set
is also a periodic sink $O_1$ together with a hyperbolic homoclinic class.

%\newpage

The next Figure \ref{fig4} describes a procedure of inserting singularities in the suspended Smale horseshoe.
We select an horizontal interval $I$ and a point $x$ in the square forming the horseshoe.

\begin{figure}[h] 
\begin{center}
\input{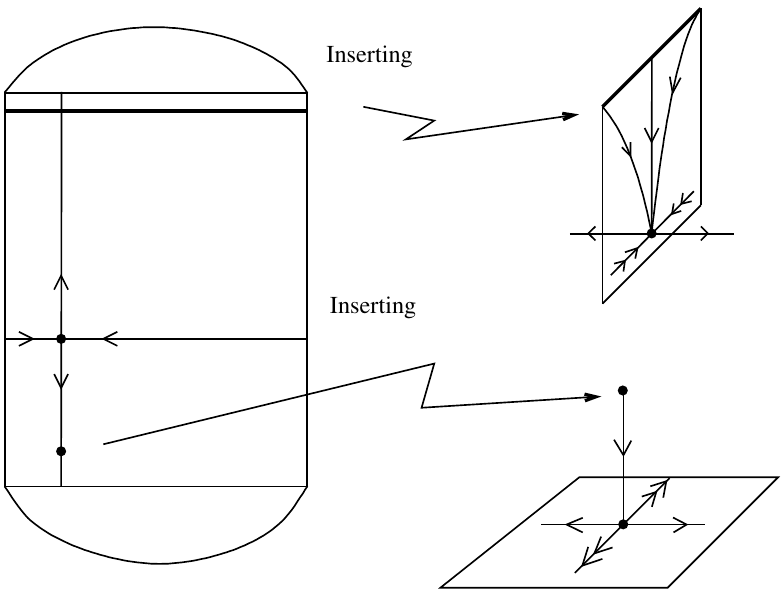_t} 
\caption{Inserting singularities.}\label{fig4}
\end{center}
\end{figure}

%\newpage
The selection is done in order to place $I$ in the stable manifold of a Lorenz-like equilibrium $\sigma_+$,
and $x$ in the stable manifold of a Lorenz-like equilibrium for the reversed flow $\sigma_-$.
This construction requires to add two additional singularities,
a source $s_-$ to which the unstable branch of $\sigma_-$
not containig $x$ goes; and a saddle $s_+$ close to $\sigma_+$.
See Figure \ref{fig0-5}.

\begin{figure}[h] 
\begin{center}
\input{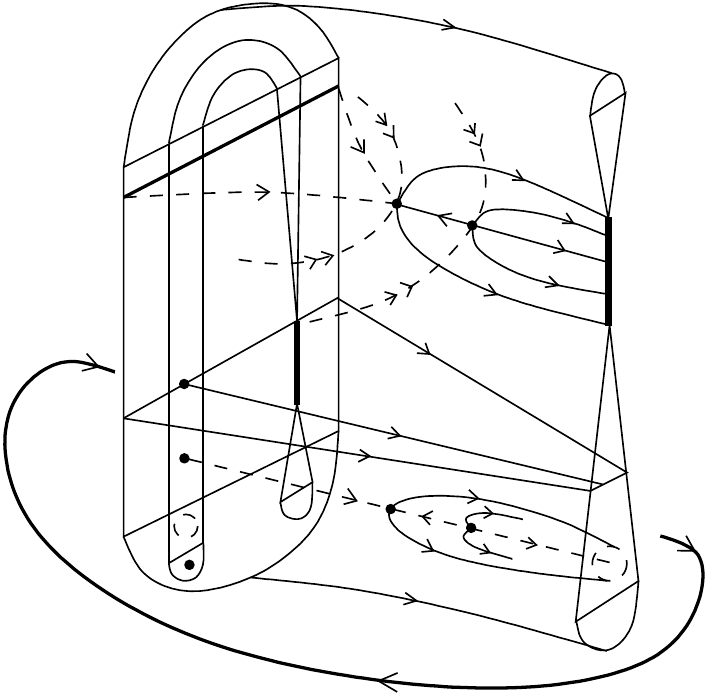_t} 
\caption{Still inserting singularities.}\label{fig0-5}
\end{center}
\end{figure}

An accurate description of the aforementioned procedure is done
in \cite{bpv} and \cite{n}.

Next we observe that the resulting flow's return map presents a cut along $I$ and a blowup circle
derived from $x$.

%\newpage
We now proceed to deform the flow in order to obtain a deformation of the return map
by pushing up one branch of the circle, and pushing down the cusped region derived from the cutting as indicated in Figure \ref{fig5}.

\begin{figure}[h] 
\begin{center}
\input{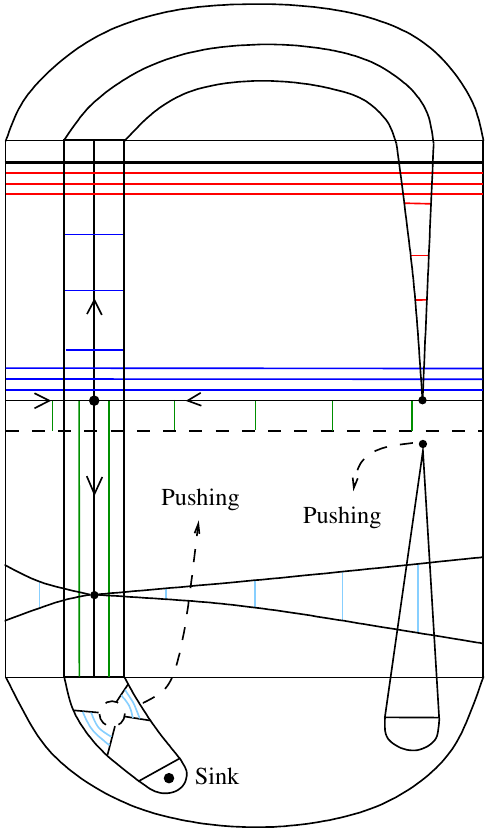_t} 
\caption{Deforming.}\label{fig5}
\end{center}
\end{figure}

%\newpage
We keep doing this deformation (see Figure \ref{fig6}) up to arrive to the final flow whose
return map is described in Figure \ref{fig7}.

\begin{figure}[h] 
\begin{center}
\input{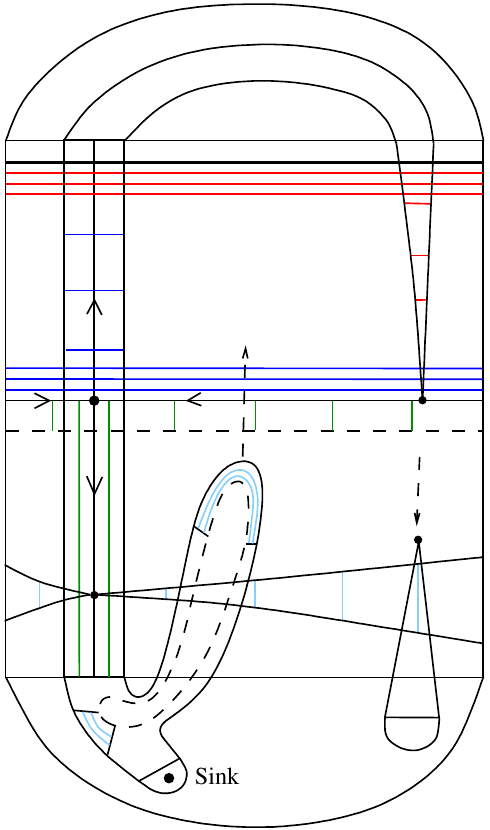_t} 
\caption{Still deforming.}\label{fig6}
\end{center}
\end{figure}

\begin{figure}[h] 
\begin{center}
\input{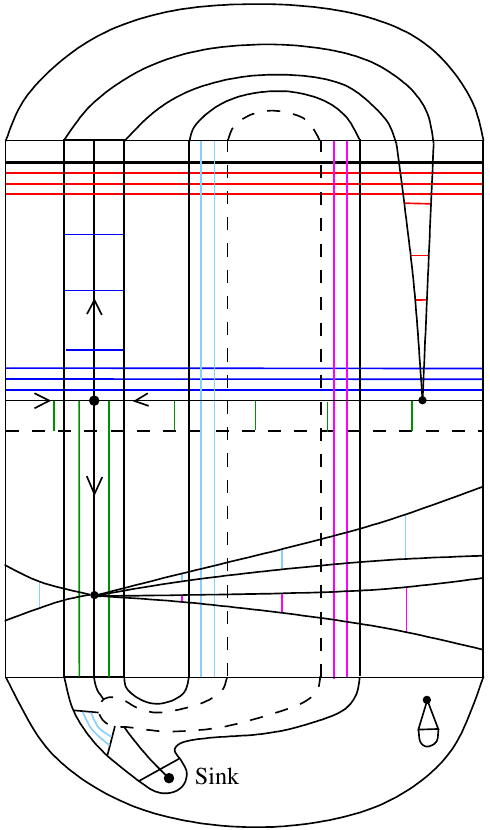_t} 
\caption{Return map.}\label{fig7}
\end{center}
\end{figure}

%\newpage
This flow is defined in a solid torus, transversal to the boundary and pointing
inward there.

The final return map (denoted by $R$) is described with some detail in Figure \ref{fig8}.

We are in position to describe the homoclinic classes $H_-$ and $H_+$
in Theorem \ref{thB}.
They are precisely the maximal invariant set of $R$ in the upper and lower rectangles $Q_+$ and $Q_-$ forming the rectangle $Q$
in Figure \ref{fig8}. These maximal sets are located in the intersections
$A\cap B\cap A'\cap B'$ (for $H_-$) and $C\cap D\cap D\cap E\cap C'\cap E'\cap D'$ (for $H_+$).
A rough description of $H_-$ and $H_+$ is
that $H_+$ is the singular horseshoe in \cite{lp} and $H_-$ its time reversal.

\begin{figure}[h] 
\begin{center}
\input{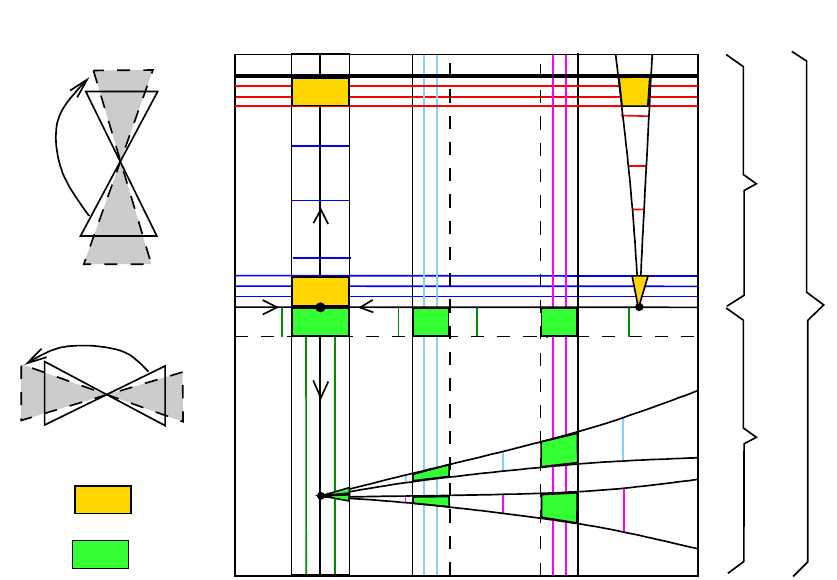_t} 
\caption{Localizing $H-$ and $H_+$ in $Q$.}\label{fig8}
\end{center}
\end{figure}

The proof that $H_-$ and $H_+$ are nontrivial homoclinic classes is done as in \cite{b},
\cite{bmp}.
The analysis in \cite{blmp} or \cite{lp} shows that $H_+$ is a sectional-hyperbolic set for the (final) flow
and that $H_+$ is a sectional-hyperbolic set for the reversed flow.
We assume that the horizontal conefield
$x\in Q\mapsto C^s_1(x)$ and the vertical conefield
$x\in Q\mapsto C^u_1(x)$, where
$$
C^s_\alpha(x)=\left\{(a,b)\in\mathbb{R}^2:\frac{|b|}{|a|}\leq\alpha \right\}
\mbox{ and }
C^u_\alpha(x)=\left\{(a,b)\in\mathbb{R}^2: \frac{|a|}{|b|}\leq \alpha\right\}, \quad\forall \alpha>0,
$$
are contracting and expanding (respectively) for the return map $R$ in the sense that
there is $\rho>1$ with the following properties:

\begin{enumerate}
\item If $x\in R^{-1}(Q)\cap Q$ then
$$
DR(x)C^u_1(x)\subset C^u_{\frac{1}{2}}(R(x))
\mbox{ and }
\|DR(x)v^u\|\geq\rho\|v^u\|, \quad\forall v^u\in C^u_1(x).
$$
\item If $x\in R(Q)\cap Q$ then
$$
DR^{-1}(x)C^s_1(x)\subset C^s_{\frac{1}{2}}(R^{-1}(x))
\mbox{ and }
\|DR^{-1}(x)v^s\|\geq \rho\|v^s\|, \quad\forall v^s\in C^s_1(x).
$$
\end{enumerate}
(See Figure \ref{fig8}.)
Since such conefields do not allow the existence of nonhyperbolic periodic points,
and are preserved by small perturbations,
we obtain that the final flow is star in its solid torus domain.

Next we observe that
$H_-$ is not hyperbolic, since it contains the singularity $\sigma_-$ and, analogously,
$H_+$ is not hyperbolic for it contains $\sigma_+$.
Since every homoclinic class is transitive, we conclude from
Theorem \ref{th1} that $H_-$ is not positively sectional-hyperbolic and $H_+$ is not negatively sectional-hyperbolic.

To complete the proof we extend the final flow from its solid torus domain
to the whole $S^3$. This is done by glueing it with another solid torus
whose core is a periodic source $O_2$.
This completes the proof.
\qed


\begin{thebibliography}{10}




\bibitem{abs}
Afraimovic, V.S.,  Bykov, V.V.;, Shilnikov, L.P.,
The origin and structure of the Lorenz attractor,
{\em Dokl. Akad. Nauk SSSR} 234 (1977), no. 2, 336--339. 





\bibitem{ap}
Ara\'ujo, V., Pacifico, M.J.,
{\em Three-dimensional flows.}
With a foreword by Marcelo Viana. Ergebnisse der Mathematik und ihrer Grenzgebiete. 3. Folge. A Series of Modern Surveys in
Mathematics [Results in Mathematics and Related Areas.
3rd Series. A Series of Modern Surveys in Mathematics], 53. Springer, Heidelberg, 2010.




\bibitem{as}
Ara\'ujo, V., Salgado, L.,
Infinitesimal Lyapunov functions for singular flows,
{\em Math. Z.} 275 (2013), no. 3-4, 863--897.



\bibitem{am}
Arbieto, A., Morales, C.A.,
A dichotomy for higher-dimensional flows,
{\em Proc. Amer. Math. Soc.} 141 (2013), no. 8, 2817--2827.



\bibitem{blmp}
Bamon, R., Labarca, R., Ma\~n\'e, R., Pacifico, M.J.,
The explosion of singular cycles,
{\em Inst. Hautes \'Etudes Sci. Publ. Math. No.} 78 (1993), 207--232. 





\bibitem{b}
Bautista, S.,
The geometric Lorenz attractor is a homoclinic class,
{\em Bol. Mat. (N.S.)} 11 (2004), no. 1, 69--78. 






\bibitem{bmp}
Bautista, S., Morales, C.A., Pacifico, M.J.,
On the intersection of homoclinic classes on singular-hyperbolic sets,
{\em Discrete Contin. Dyn. Syst.} 19 (2007), no. 4, 761--775. 








\bibitem{bpv}
Bonatti, C., Pumari\~no, A., Viana, M.,
Lorenz attractors with arbitrary expanding dimension,
{\em C. R. Acad. Sci. Paris S\'er. I Math.} 325 (1997), no. 8, 883--888. 






\bibitem{cs}
Carrasco-Olivera, D., San Mart\'{i}n, B.,
Robust attractors without dominated splitting on manifolds with boundary,
{\em Discrete Contin. Dyn. Syst.} 34 (2014), no. 11, 4555--4563. 






\bibitem{dgw} 
Diminnie, Ch., R., G\"ahler, S., White, A.,
$2$-inner product spaces,
Collection of articles dedicated to Stanislaw Golpolhkab on his 70th birthday, II.
{\em Demonstratio Math.} 6 (1973), 525--536.



\bibitem{gaw}
Gan, S., Wen, L.,
Nonsingular star flows satisfy Axiom A and the no-cycle condition,
{\em Invent. Math.} 164 (2006), no. 2, 279--315. 




\bibitem{g}
G\"ahler, S.,
Lineare 2-normierte R\"aume,
{\em Math. Nachr.} 28 (1964), 1--43. 






\bibitem{gw}
Guckenheimer, J.; Williams, R., Structural stability of Lorenz attractors,
{\em Publ. Math. IHES} 50 (1979), 59--72.










\bibitem{hps}
Hirsch, M., Pugh, C., Shub, M.,
{\em Invariant manifolds},
Lec. Not. in Math. 583 (1977), Springer-Verlag.





\bibitem{ka} 
Kawaguchi, A., 
{\em On areal spaces, I. Metric tensors in $n$-dimensional spaces based on the notion of two-dimensional area},
Tensor N.S.  1 (1950), 14--45.







\bibitem{lp}
Labarca, R., Pacifico, M.J.,
Stability of singularity horseshoes,
{\em Topology} 25 (1986), no. 3, 337--352. 








\bibitem{memo}
Metzger, R., Morales, C.A.,
Sectional-hyperbolic systems,
{\em Ergodic Theory Dynam. Systems} 28 (2008), no. 5, 1587--1597.


\bibitem{m}
Morales, C.A.,
Characterizing singular-hyperbolicity,
Preprint (2014), to appear.





\bibitem{mp}
Morales, C.A., Pacifico, M.J.,
A dichotomy for three-dimensional vector fields,
{\em Ergodic Theory Dynam. Systems} 23 (2003), no. 5, 1575--1600.

\bibitem{mp2}
Morales, C.A., Pacifico, M.J.
Sufficient conditions for robustness of attractors,
{\em Pacific J. Math.} 216 (2004), no. 2, 327--342. 


\bibitem{mp3}
Morales, C.A., Pacifico, M.J.,
A spectral decomposition for singular-hyperbolic sets,
{\em Pacific J. Math.} 229 (2007), no. 1, 223--232. 





\bibitem{mv}
Morales, C.A., Vilches, M.,
On 2-Riemannian manifolds,
{\em SUT J. Math.} 46 (2010), no. 1, 119--153. 








\bibitem{n}
Newhouse, S.,
On simple arcs between structurally stable flows.
Dynamical systems—Warwick 1974 (Proc. Sympos. Appl. Topology and Dynamical Systems, Univ. Warwick, Coventry, 1973/1974;
presented to E. C. Zeeman on his fiftieth birthday), pp. 209--233. {\em Lecture Notes in Math.}, Vol. 468, Springer, Berlin, 1975. 












\bibitem{sgw}
Shi, Y., Gan, S., Wen, L.,
On the singular hyperbolicity of star flows,
Preprint ArXiv http://de.arxiv.org/abs/1310.8118v1.



\bibitem{zgw}
Zhu, S., Gan, S., Wen, L.,
Indices of singularities of robustly transitive sets,
{\em Discrete Contin. Dyn. Syst.} 21 (2008), no. 3, 945--957. 


















\end{thebibliography}
\end{document}